\newtheorem{theorem}{Theorem}[section]
\newtheorem{lemma}[theorem]{Lemma}
\newtheorem{claim}[theorem]{Claim}
\newtheorem{conjecture}[theorem]{Conjecture}
\newcommand{\rr}{\mathds{R}}
\newcommand{\ff}{\mathcal{F}}
\def\rr{\mathds{R}}
\DeclareMathOperator{\conv}{conv}
\DeclareMathOperator{\col}{col}
\title{Tverberg partitions as weak epsilon-nets}
\author{Pablo Sober\'on}
\begin{document}

\maketitle

\begin{abstract}
We prove a Tverberg-type theorem using the probabilistic method.  Given $\varepsilon >0$, we find the smallest number of partitions of a set $X$ in $R^d$ into $r$ parts needed in order to induce at least one Tverberg partition on every subset of $X$ with at least $\varepsilon |X|$ elements.  This generalizes known results about Tverberg's theorem with tolerance.
\end{abstract}



\section{Introduction}

Tverberg's theorem and the weak $\varepsilon$-net theorem for convex sets are central results describing the combinatorial properties of convex sets.  Their statements are the following

\begin{theorem}[Tverberg 1966, \cite{Tverberg:1966tb}]
Let $r,d$ be positive integers.  Given a set $X$ of $(r-1)(d+1)+1$ points in $\rr^d$, there is a partition of $X$ into $r$ sets whose convex hulls intersect.
\end{theorem}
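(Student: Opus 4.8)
The plan is to follow Sarkaria's tensor-product argument, which reduces Tverberg's theorem to the colorful Carath\'eodory theorem of B\'ar\'any. First I would lift the points to $\rr^{d+1}$ by setting $\hat x_i = (1,x_i)$ for $i=1,\dots,N$ with $N=(r-1)(d+1)+1$ (treating the $x_i$ as indexed, so coincidences cause no trouble). The point of the lift is that a common point $p$ of $r$ convex hulls $\conv(X_1),\dots,\conv(X_r)$ is the same thing as $r$ non-negative combinations of the $\hat x_i$, one supported on each $X_j$, all equal to a single vector $(1,p)$ with positive first coordinate.

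The key step is the tensor construction. Fix vectors $b_1,\dots,b_r\in\rr^{r-1}$ that are the vertices of a simplex centered at the origin, so that $b_1+\dots+b_r=0$ while every $r-1$ of them are linearly independent. For each index $i$ form the color class $C_i=\{\,\hat x_i\otimes b_j : j=1,\dots,r\,\}\subset \rr^{d+1}\otimes\rr^{r-1}\cong\rr^{(d+1)(r-1)}$. Since $\frac1r\sum_{j}\hat x_i\otimes b_j=\hat x_i\otimes\bigl(\frac1r\sum_j b_j\bigr)=0$, the origin lies in $\conv(C_i)$ for every $i$. There are exactly $(d+1)(r-1)+1=N$ color classes, in a space of dimension $(d+1)(r-1)$, so the colorful Carath\'eodory theorem applies and yields a rainbow choice $j(1),\dots,j(N)$ together with weights $\mu_i\ge 0$, $\sum_i\mu_i=1$, such that $\sum_{i=1}^{N}\mu_i\,\hat x_i\otimes b_{j(i)}=0$.

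Finally I would unwind this tensor identity. Put $X_j=\{x_i : j(i)=j\}$ and $y_j=\sum_{i\in X_j}\mu_i\hat x_i\in\rr^{d+1}$, so the relation reads $\sum_{j=1}^{r} y_j\otimes b_j=0$. Substituting $b_r=-(b_1+\dots+b_{r-1})$ rewrites this as $\sum_{j=1}^{r-1}(y_j-y_r)\otimes b_j=0$, and linear independence of $b_1,\dots,b_{r-1}$ forces $y_1=\dots=y_r=:y$. Reading off the first coordinate gives $\sum_{i\in X_j}\mu_i=1/r$ for every $j$; in particular no $X_j$ is empty, and the $X_j$ partition the point set. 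The remaining $d$ coordinates of $ry$ then form a point that is a convex combination of $X_j$ (with weights $r\mu_i$, which sum to $1$) simultaneously for every $j$, so $\bigcap_j\conv(X_j)\ne\emptyset$, as required.

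The one genuinely clever point — and the only real obstacle — is the tensor construction: recognizing that tensoring the lifted points with the vertices of an $(r-1)$-simplex converts "a single partition into $r$ parts" into "a single rainbow transversal," and that the arithmetic $(d+1)(r-1)+1$ is precisely the number of color classes the colorful Carath\'eodory theorem demands in dimension $(d+1)(r-1)$. Everything after that is bookkeeping in linear algebra. If one wanted a fully self-contained argument, one would additionally fold in the short proof of the colorful Carath\'eodory theorem (a local-improvement argument on transversals, made to terminate by a compactness or minimality condition), which I would otherwise use as a black box.
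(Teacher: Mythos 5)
Your proof is correct and is precisely Sarkaria's tensor-product reduction to the colorful Carath\'eodory theorem, which is the same route the paper sketches in its preliminaries (Lemma \ref{lemma-magic} together with Theorem \ref{theorem-colorcarath}); the paper cites Tverberg's theorem without reproducing the argument, but your write-out fills in exactly the steps it alludes to. The linear-algebra unwinding (eliminating $b_r$ and using independence of $b_1,\dots,b_{r-1}$ to force $y_1=\dots=y_r$, then reading off the weight $1/r$ from the lifted coordinate) is carried out correctly.
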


We call a partition into $r$ sets as above a \textit{Tverberg partition}.  For a set $Y \subset \rr^d$, we denote by $\conv Y$ its convex hull.

\begin{theorem}[Weak $\varepsilon$-net; Alon, B\'ar\'any, F\"uredi, Kleitman 1992, \cite{Alon:1992ek}]
Let $d$ be a positive integer and $\varepsilon >0$.  Then, there is an integer $n = n(\varepsilon, d)$ such that the following holds.  For any finite set $X$ of points in $\rr^d$, there is a set $K \subset \rr^d$ of $n(\varepsilon, d)$ points such that for all $Y \subset X$ with $|Y| \ge \varepsilon |X|$, we have that $\conv Y$ intersects $K$.
\end{theorem}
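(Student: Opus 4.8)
The plan is to build the net $K$ greedily, using B\'ar\'any's first selection lemma as the engine, so that the argument ultimately rests on Tverberg's theorem. Two easy reductions come first. If $\varepsilon|X| < d+1$ we may simply take $K = X$, of size below $(d+1)/\varepsilon$, so assume $n := |X|$ is large and set $s := \lceil \varepsilon n\rceil \ge d+1$. Every $Y \subseteq X$ with $|Y| \ge \varepsilon|X|$ contains an $s$-element subset $Z$ with $\conv Z \subseteq \conv Y$, and by Carath\'eodory's theorem $\conv Z$ is the union of the closed simplices $\conv T$ over the $(d+1)$-element subsets $T \subseteq Z$. Hence it suffices to choose $K$ so that \emph{every $s$-element subset of $X$ contains a $(d+1)$-element subset $T$ with $\conv T \cap K \ne \emptyset$}. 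Call such a $T$ \emph{good} for $K$, and otherwise \emph{bad}; the goal becomes a set $K$ for which no $s$-element subset of $X$ is entirely bad.

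The one substantial ingredient is the first selection lemma: there is a constant $c_d > 0$ depending only on $d$ such that every $m$-point set in $\rr^d$ has a point of $\rr^d$ lying in at least $c_d \binom{m}{d+1}$ of the closed simplices spanned by the set. I would include its proof, which can be based on Tverberg's theorem: apply Tverberg to every subset of size $(d+1)(r-1)+1$ (with $r$ of order $m/(d+1)$), use Carath\'eodory inside each Tverberg block to pass to a $(d+1)$-element simplex through the common Tverberg point, and average over all these subsets and incidences to pin down a single point lying in a positive fraction of all $\binom{m}{d+1}$ simplices. Securing the \emph{constant} fraction $c_d$, rather than merely ``many'' simplices, is the heart of the matter; granting it, the remainder is bookkeeping. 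This is the step I expect to be the main obstacle.

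Finally I would construct $K$ iteratively. Start with $K = \emptyset$. While some $s$-element subset $Z \subseteq X$ is entirely bad for the current $K$, apply the first selection lemma to the point set $Z$: it produces a point $p \in \rr^d$ lying in at least $c_d \binom{s}{d+1}$ of the simplices $\conv T$ with $T \subseteq Z$, each of which is currently bad; add $p$ to $K$. A good simplex stays good, so each step turns at least $c_d \binom{s}{d+1}$ bad $(d+1)$-subsets of $X$ good and strictly decreases the number of bad ones by that much; since this count starts below $\binom{n}{d+1}$ and stays nonnegative, the loop halts after at most $\binom{n}{d+1} / (c_d \binom{s}{d+1})$ steps, which for $n$ large is at most $c_d^{-1}(2/\varepsilon)^{d+1}$ (each factor $(n-i)/(s-i)$ being at most $2/\varepsilon$). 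When the loop halts, no $s$-element subset of $X$ is entirely bad, so by the reductions $K$ is a weak $\varepsilon$-net; its size is at most $c_d^{-1}(2/\varepsilon)^{d+1}$, plus a trivial $O(1/\varepsilon)$ term absorbing the small-$n$ case, giving a bound $n(\varepsilon, d)$ that depends only on $\varepsilon$ and $d$, as required.
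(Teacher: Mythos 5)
The paper does not prove this statement at all --- it is quoted as background (the Alon--B\'ar\'any--F\"uredi--Kleitman theorem) and used only for context, so there is no ``paper proof'' to compare against. Judged on its own, your argument is the classical first-selection-lemma proof (as in Matou\v{s}ek's \emph{Lectures on Discrete Geometry}): the greedy loop is correct, since an entirely bad $s$-set has all $\binom{s}{d+1}$ of its simplices bad, the selection point converts at least $c_d\binom{s}{d+1}$ of them to good, and the bad count is bounded by $\binom{n}{d+1}$, giving $|K| = O(c_d^{-1}\varepsilon^{-(d+1)})$ independent of $|X|$. The only soft spot is your sketch of the first selection lemma itself, which you rightly flag as the crux: Carath\'eodory applied inside each Tverberg block yields only about $r \approx m/(d+1)$ simplices through the Tverberg point of a given subset, and averaging such point--simplex incidences over all subsets does not by itself produce a \emph{single} point in $c_d\binom{m}{d+1}$ simplices, since the Tverberg points vary from subset to subset. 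The clean route is to apply Tverberg once to all $m$ points with $r = \lceil m/(d+1)\rceil$ parts and then invoke the colorful Carath\'eodory theorem (Theorem \ref{theorem-colorcarath} in this paper) for every choice of $d+1$ of the parts, producing $\binom{r}{d+1} \ge c_d\binom{m}{d+1}$ distinct simplices all containing the one Tverberg point. With that substitution your proof is complete and correct; it is also the historically standard argument, whereas sharper bounds of order $\varepsilon^{-d}\operatorname{polylog}(\varepsilon^{-1})$ mentioned in the paper require different, more involved techniques.
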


For an overview of both theorems and how they have shaped discrete geomety, consult \cite{Matousek:2002td, barany2017survey}.  One key aspect of the weak $\varepsilon$-net theorem is that $n(\varepsilon, d)$ does not depend on $|X|$.  The two theorems are closely related to each other.  Tverberg's theorem is an important tool in the proof of the ``first selection lemma'' \cite{Barany:1982va}, which in turn is used to prove the weak $\varepsilon$-net theorem.  Finding upper and lower bounds for $n(\varepsilon, d)$ is a difficult problem.  As an upper bound, for any fixed $d$ we have $n(\varepsilon, d) = O(\varepsilon^{-d}\operatorname{polylog}(\varepsilon^{-1}))$ \cite{CEGGSW95, MW04}.  There are lower bounds superlinear in $1/\varepsilon$, for any fixed $d$ we have $n(\varepsilon,d) = \Omega ((1/\varepsilon)\ln^{d-1}(1/\varepsilon))$ \cite{Bukh:2011vs}.

 The purpose of this paper is to provide a different link between these two theorems.  Just as the weak $\varepsilon$-net gives you a fixed-size set which intersects the convex hull of each not too small subset of $X$, now we seek a fixed number of partitions of $X$, such that for every not too small subset $Y \subset X$, at least one of the partitions induces a Tverberg partition on $Y$.  Unlike the weak $\varepsilon$-net problem, we get an exact value for the number of partitions needed.

Given a partition $\mathcal{P}$ of $X$ and $Y \subset X$, we denote by $\mathcal{P} (Y)$ the restriction of $\mathcal{P}$ on $Y$,
\[
\mathcal{P}(Y) = \{K \cap Y: K \in \mathcal{P}\}.
\]

If $\mathcal{P}$ is a partition into $r$ sets, then $\mathcal{P}(Y)$ is also a partition into $r$ sets, though some may be empty.  With this notation, we can state the main result of this paper.

\begin{theorem}\label{theorem-main}
Let $1\ge\varepsilon > 0$ be a real number and $r,d$ be positive integers.  Then, there is an integer $m=m(\varepsilon, r)$ such that the following is true.  For every sufficiently large finite set $X \subset \rr^d$, there are $m$ partitions $\mathcal{P}_1, \ldots, \mathcal{P}_m$ of $X$ into $r$ parts each such that, for every subset $Y \subset X$ with $|Y| \ge \varepsilon |X|$, there is a $k$ such that $\mathcal{P}_k(Y)$ is a Tverberg partition.  Moreover, we have 
\[
m(\varepsilon,r) = \left\lfloor \frac{\ln\left(\frac{1}{\varepsilon}\right)}{\ln\left(\frac{r}{r-1}\right)}\right\rfloor + 1.
\]
\end{theorem}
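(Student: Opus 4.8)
The plan is to establish the two halves of the formula separately: a short combinatorial argument for the lower bound $m(\varepsilon,r)\ge\lfloor\ln(1/\varepsilon)/\ln(r/(r-1))\rfloor+1$, and a probabilistic construction for the matching upper bound. Underlying everything is one monotonicity observation: if $\mathcal{P}(Y)$ is a Tverberg partition and $Y\subseteq Y'$, then $\conv(K\cap Y)\subseteq\conv(K\cap Y')$ for every $K\in\mathcal{P}$, so $\mathcal{P}(Y')$ is a Tverberg partition too; equivalently, ``$\mathcal{P}(Y)$ is not a Tverberg partition'' is closed under passing to subsets of $Y$. For the lower bound I would fix $m':=\lfloor\ln(1/\varepsilon)/\ln(r/(r-1))\rfloor$ and arbitrary partitions $\mathcal{P}_1,\dots,\mathcal{P}_{m'}$ of a large set $X$ with $|X|=n$. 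Put $X_0=X$ and let $X_j$ be $X_{j-1}$ with the smallest part of $\mathcal{P}_j(X_{j-1})$ deleted; that part has at most $\lfloor|X_{j-1}|/r\rfloor$ points, so $|X_j|\ge\frac{r-1}{r}|X_{j-1}|$ and hence $|X_{m'}|\ge(\frac{r-1}{r})^{m'}n\ge\varepsilon n$. The set $Y:=X_{m'}$ is bad for every $\mathcal{P}_j$, since $\mathcal{P}_j(X_j)$ has an empty part, so it is not a Tverberg partition, and $Y\subseteq X_j$ together with the monotonicity observation forces $\mathcal{P}_j(Y)$ not to be one either. Thus $m'$ partitions never suffice, and $m(\varepsilon,r)\ge m'+1$.

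For the upper bound, with $m=m'+1$, I would build $\mathcal{P}_1,\dots,\mathcal{P}_m$ by assigning to each point of $X$ an independent uniformly random label in $\{1,\dots,r\}$. By the monotonicity observation it suffices to show that with positive probability no set $Y$ with $|Y|\ge\lceil\varepsilon n\rceil$ has $\mathcal{P}_k(Y)$ non-Tverberg for all $k$. The heart of the matter is a certificate of failure. If $\mathcal{P}_k(Y)=\{K_1^k\cap Y,\dots,K_r^k\cap Y\}$ is not a Tverberg partition then $\bigcap_i\conv(K_i^k\cap Y)=\emptyset$, and by Helly's theorem together with a Farkas-type infeasibility argument one obtains at most $d+1$ supporting halfspaces $H_1,\dots,H_q$ of the polytopes $\conv(K_{i_l}^k\cap Y)$ — each $H_l$ a facet, hence determined by at most $d$ points of $X$ — with $\bigcap_{l}H_l=\emptyset$. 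Since $K_{i_l}^k\cap Y\subseteq\conv(K_{i_l}^k\cap Y)\subseteq H_l$, the set $Y$ is disjoint from $A:=\bigcup_l\bigl(K_{i_l}^k\setminus H_l\bigr)$. The crucial point is that the combinatorial data here — the halfspaces $H_1,\dots,H_q$ and the tags $i_1,\dots,i_q\in\{1,\dots,r\}$ — range over a family $\mathcal{C}$ of size $n^{O_d(1)}$ that does not depend on the random labels; so $A=A_{k,c}$ is a function of the labels of $\mathcal{P}_k$ and of a certificate $c\in\mathcal{C}$ (a handful of degenerate certificates covering the case of an empty part are included in $\mathcal{C}$).

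Now fix a tuple $(c_1,\dots,c_m)\in\mathcal{C}^m$. Because $\bigcap_l H_l=\emptyset$ inside $c_k$, every point $x\in X$ violates at least one halfspace of $c_k$, and for the corresponding index $l$ the only way to have $x\notin A_{k,c_k}$ is $x\notin K_{i_l}^k$; therefore $\Pr[x\notin A_{k,c_k}]\le\frac{r-1}{r}$, and by independence of the labels across the $m$ partitions $\Pr[x\notin A_{k,c_k}\text{ for all }k]\le\bigl(\tfrac{r-1}{r}\bigr)^m$. These events being independent over $x\in X$, a Chernoff bound gives $\bigl|\{x: x\notin A_{k,c_k}\ \forall k\}\bigr|\le(\tfrac{r-1}{r})^m n+o(n)<\varepsilon n$ with probability $1-e^{-\Omega(n)}$, where the strict inequality $(\tfrac{r-1}{r})^m<\varepsilon$ holds precisely because $m=\lfloor\ln(1/\varepsilon)/\ln(r/(r-1))\rfloor+1>\ln(1/\varepsilon)/\ln(r/(r-1))$. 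A union bound over the $n^{O_d(m)}$ tuples in $\mathcal{C}^m$ then shows that, with probability tending to $1$ as $n\to\infty$, every $Y$ that is bad for all $m$ partitions has fewer than $\varepsilon n$ points; this is exactly what is required for all sufficiently large $X$.

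The step I expect to be the main obstacle is the certificate reduction in the second paragraph: converting ``the $r$ convex hulls have no common point'' into ``$Y$ avoids one of polynomially many structured, label-independent sets'', while keeping faithful track of the Helly and Carath\'eodory reductions and of the fact that a minimal Farkas certificate can be taken to consist of facets, each spanned by boundedly many points of $X$. Everything downstream — in particular the appearance of the exact base $r/(r-1)$ and the independence of $m$ from the dimension $d$ — rests on the clean estimate $\Pr[x\notin A_{k,c_k}]\le\frac{r-1}{r}$, which itself is a consequence of the ``empty intersection'' property of the certificate; getting the constant here exactly right, rather than something like $\frac{r-1}{r}$ degraded by a dimension factor, is what makes the bound tight.
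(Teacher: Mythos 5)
Your proposal reaches the correct bound and is sound in outline, but it takes a genuinely different route for the upper bound. The paper first applies Sarkaria's tensor trick, lifting $X$ to $\rr^{(d+1)(r-1)}$ so that ``$\mathcal{P}_k(Y)$ is not a Tverberg partition'' becomes ``the transversal misses a single halfspace through the origin,'' where the halfspaces range over a polynomial-size family (Lemma \ref{lemma-halfspace}); it then constructs the $m$ transversals one at a time, maintaining at each stage a polynomial-size family $\gimel_k$ of sets that dominate all still-bad $\mathcal{G}$, with Hoeffding plus a union bound at each step. You instead work directly in $\rr^d$, choose all $m$ random partitions at once, and certify failure by a Helly-plus-separation argument ($q\le d+1$ halfspaces $H_l\supseteq \conv(K_{i_l}^k\cap Y)$ whose intersection misses $X$), followed by a single union bound over $m$-tuples of certificates. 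Both routes give the same per-point probability $\frac{r-1}{r}$ and hence the same dimension-free $m$; the lift buys a one-halfspace certificate and a cleaner derandomization-style bookkeeping, while your version avoids the tensor machinery at the cost of a more delicate certificate. Your lower-bound argument is identical to the paper's.

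The one step you must tighten is the discretization of the certificate, which you correctly flagged. The claim that each $H_l$ can be taken to be ``a facet, determined by at most $d$ points of $X$'' is not right as stated: a supporting halfspace witnessing the empty intersection need not be facet-defining, and rotating it onto points of $X$ can change which points of $X$ it contains, potentially destroying the property you actually use. The correct fix is the trace-class argument (this is exactly the content of Lemma \ref{lemma-halfspace}, minus the origin constraint): group halfspaces by the subset $H\cap X$ they induce, keep one representative per class ($O(|X|^{d+1})$ classes), and replace each geometric $H_l$ by its representative $H_l'$. Containment of the relevant points of $Y$ is preserved since $H_l'\cap X=H_l\cap X$, and—crucially—the condition your probability estimate actually needs is not $\bigcap_l H_l'=\emptyset$ but only $X\cap\bigcap_l H_l'=\emptyset$, which is also preserved under this replacement. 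With that substitution (and the degenerate certificate for an empty part), your argument goes through.
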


An equivalent statement is that $\varepsilon > ((r-1)/r)^m$ if and only if $m(\varepsilon, r) \le m$.  One should notice that $1/\ln(r/(r-1)) \sim r$, so $m(\varepsilon, r) \sim r \ln (1/\varepsilon)$.  One surprising aspect of this result is that $m$ does not depend on the dimension.  The effect of the dimension only appears when we look at how large $X$ must be for the theorem to kick in.  The value for $|X|$ where the theorem starts working is, up to polylogarithmic terms, $m  d r^3 (\varepsilon - ((r-1)/r)^m)^{-2}$.

The proof of Theorem \ref{theorem-main} follows from a repeated application of the probabilistic method, contained in section \ref{section-proof}.  We build up on the techniques of \cite{soberon2016robust} to prove Tverberg-type results by making random partitions.  The key new observation is that, given $m$ partitions of $X$, the number of containment-maximal subsets $Y$ such that $\mathcal{P}_k(Y)$ is not a Tverberg partition for any $k$ is polynomial in $|X|$.

This result is also closely related to Tverberg's theorem with tolerance.

\begin{theorem}[Tverberg with tolerance; Garc\'ia-Col\'in, Raggi, Rold\'an-Pensado 2017, \cite{GRR17tolerant}]\label{theorem-tolerance}
Let $r,t,d$ be positive integers, were $r,d$ are fixed.  There is an integer $N(r,t,d)=rt+o(t)$ such that the following holds.  For any set $X$ of $N$ points in $\rr^d$, there is a partition of $X$ into $r$ sets $X_1, \ldots, X_r$ such that, for all $C \subset X$ of cardinality $t$, we have
\[
\bigcap_{j=1}^r \conv (X_j \setminus C) \neq \emptyset.
\]
\end{theorem}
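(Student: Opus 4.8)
The plan is to establish the upper bound $N(r,t,d)\le rt+\Theta_{r,d}\!\big(\sqrt{t\log t}\big)$ by the probabilistic method, in the spirit of \cite{soberon2016robust}, and to note the essentially matching lower bound $N(r,t,d)\ge r(t+1)$. Fix $N=rt+s$ with $s$ to be chosen, let $X$ be an arbitrary $N$-point set in $\rr^d$, and form a random partition $X_1,\dots,X_r$ by assigning each point of $X$ independently and uniformly to one of the $r$ classes. For a fixed $C$, the induced partition of $X\setminus C$ is again uniform on an $(N-t)$-point set, so one is tempted to union bound over all $\binom{N}{t}$ choices of $C$; this is hopeless, since $\binom{N}{t}$ is exponential in $t$. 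The whole point of the argument is that the bad events can be encoded by only polynomially many combinatorial configurations.

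The key step is a quantitative ``emptiness forces a large deletion set'' lemma. Suppose $C$ has $|C|=t$ and $\bigcap_{j=1}^r\conv(X_j\setminus C)=\emptyset$. If some $X_j\setminus C$ is empty, then $|X_j|\le t$. Otherwise the sets $K_j:=\conv(X_j\setminus C)$ are nonempty compact convex sets with empty intersection; let $z$ minimize the coercive convex function $x\mapsto\max_j d(x,K_j)$ (with $d(\cdot,\cdot)$ the Euclidean distance), let $\delta>0$ be its minimum value, let $J'=\{j:d(z,K_j)=\delta\}$, and let $n_j$ be the unit vector pointing from the nearest point of $K_j$ to $z$. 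The projection inequality for convex sets gives $K_j\subseteq\{x:\langle n_j,x-z\rangle\le-\delta\}$, and first-order optimality of $z$ gives $0\in\conv\{n_j:j\in J'\}$; by Carath\'eodory we may shrink $J'$ so that $|J'|\le d+1$. It follows that every point of $X_j$ in the closed halfspace $H_j:=\{x:\langle n_j,x-z\rangle\ge 0\}$ must lie in $C$, so $C\supseteq\bigcup_{j\in J'}(X_j\cap H_j)$, a disjoint union; and since $0\in\conv\{n_j:j\in J'\}$, the halfspaces $H_j$, $j\in J'$, cover $\rr^d$. Hence $t=|C|\ge\sum_{j\in J'}|X_j\cap H_j|$.

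Thus it suffices to make the random partition satisfy: (a) $|X_j|\ge t+1$ for every $j$; and (b) $\sum_{j\in J'}|X_j\cap H_j|>t$ for every $J'\subseteq\{1,\dots,r\}$ with $|J'|\le d+1$ and every choice of closed halfspaces $(H_j)_{j\in J'}$ whose union is $\rr^d$. For a fixed configuration in (b), $\sum_{j\in J'}|X_j\cap H_j|=\sum_{x\in X}\mathbf 1\big[x\in X_{j(x)}\cap H_{j(x)}\big]$ (with $j(x)$ the class of $x$) is a sum of independent $0/1$ variables, each of expectation $\ge 1/r$ because the $H_j$ cover $\rr^d$; its mean is therefore $\ge N/r$, and a Chernoff bound gives failure probability $\exp\!\big(-\Omega(s^2/(rN))\big)$, and likewise for (a). Since the trace $X\cap H$ of a halfspace on $X$ takes only $O(N^{d+1})$ distinct values (Sauer--Shelah, halfspaces in $\rr^d$ having VC dimension $d+1$), there are only $O\!\big(2^r N^{(d+1)^2}\big)$ configurations to control, so the union bound closes once $s=\Theta_{r,d}\!\big(\sqrt{t\log t}\big)$, i.e. $N=rt+o(t)$; a good partition then exists with positive probability.

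I expect the main obstacle to be the structural lemma of the second paragraph: one has to turn the purely existential statement ``$\bigcap_j\conv(X_j\setminus C)=\emptyset$'' into a certificate supported on polynomially many combinatorial types, and it is the closest-point/subdifferential argument --- essentially a dual form of Carath\'eodory's theorem --- that does this while keeping the leading coefficient of $t$ exactly equal to $r$; a cruder certificate (for instance, forcing a single common ``deep'' point for all $C$) would reintroduce a factor depending on $d$. For tightness, observe that in any partition of fewer than $r(t+1)$ points into $r$ classes some class has at most $t$ points, and deleting that class (padding $C$ to size $t$) leaves an empty convex hull, so $N(r,t,d)\ge r(t+1)$; combined with the upper bound this yields $N(r,t,d)=rt+o(t)$.
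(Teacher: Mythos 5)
This theorem is quoted from \cite{GRR17tolerant}; the paper itself contains no proof of it, so the relevant comparison is with the machinery the paper does use (Sarkaria's tensor lift plus random transversals, i.e.\ Theorem \ref{theorem-epsilon-caratheodory}, of which Theorem \ref{theorem-tolerance} is essentially the case $m=1$, $\varepsilon>1-1/r$) and with \cite{soberon2016robust}. Your argument is correct and in fact yields the stronger bound $N=rt+O_{r,d}(\sqrt{t\log t})$ of \cite{soberon2016robust}, which together with your easy lower bound $N\ge r(t+1)$ gives the stated $rt+o(t)$. What you do differently is to stay entirely in the primal space $\rr^d$: your closest-point/subdifferential certificate (the minimizer $z$ of $\max_j d(\cdot,K_j)$, the active unit normals $n_j$ with $0\in\conv\{n_j:j\in J'\}$, hence at most $d+1$ halfspaces through $z$ covering $\rr^d$ with $X_j\cap H_j\subseteq C$) plays exactly the role that the tensor lift, colorful Carath\'eodory and Lemma \ref{lemma-halfspace} play in the paper: it turns ``some deletion of size $t$ kills the intersection'' into one of polynomially many events, each a lower-tail event for a sum of independent indicators of mean at least $N/r$, so Hoeffding plus a union bound over the $O(2^rN^{(d+1)^2})$ configurations closes once $s=\Theta_{r,d}(\sqrt{t\log t})$. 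Your route is more elementary and self-contained (no lift to $\rr^{(d+1)(r-1)}$ is needed, and the leading coefficient $r$ comes out directly from the covering halfspaces); the paper's lifted route is less direct but more flexible, since the same argument gives Theorem \ref{theorem-main} for all $m$ and the colorful variants with no additional geometric work. Two small points to tighten: the union bound should formally be taken over tuples of halfspace \emph{traces} on $X$ whose union is all of $X$ (both the event and the expectation bound depend only on these traces, and the covering halfspaces produced by your lemma do cover $X$), and the Hoeffding exponent is $\Omega(s^2/(r^2N))$ rather than $\Omega(s^2/(rN))$; neither affects the $\Theta_{r,d}(\sqrt{t\log t})$ conclusion.
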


This is a result that is motivated by earlier work of Larman \cite{Larman:1972tn}, who studied the case $t=1, r=2$.  Theorem \ref{theorem-tolerance} determines the correct leading term as $t$ becomes large.  This result been improved to $N = rt + \tilde{O}(\sqrt{t})$, where the $\tilde{O}$ term hides polylogarithmic factors, and is polynomial in $r,d$ \cite{soberon2016robust}.  In the notation of Theorem \ref{theorem-main}, Theorem \ref{theorem-tolerance} says that if $\varepsilon > 1 - 1/r$, then $m(\varepsilon, r) = 1$.  Improved bounds for small values of $t$ can be found in \cite{Soberon:2012er, MZ14tolerant}.

As the driving engine in the proof of Theorem \ref{theorem-main} is Sarkaria's tensoring technique, described in section \ref{section-preliminaries}, it can be easily modified to get similar versions of a multitude of variations of Tverberg's theorem.  This includes Tverberg ``plus minus'' \cite{barany2016tverberg}, colorful Tverberg with equal coefficients \cite{soberon2015equal} and asymptotic variations of Reay's conjecture \cite{soberon2016robust}.  We do not include those variations explicitly.  We do include an $\varepsilon$-version for the colorful Tverberg theorem in section \ref{section-colored}, as it is closely related to a conjecture in \cite{soberon2016robust}.  

A natural question that follows the results of this paper is to determine whether a topological version of Theorem \ref{theorem-main} also holds.

\section{Preliminaries}\label{section-preliminaries}

\subsection{Sarkaria's technique.}
We start discussing the preliminaries for the the proof of Theorem \ref{theorem-main}.  At the core of the proof is Sarkaria's technique to prove Tverberg's theorem via tensor products \cite{Sarkaria:1992vt, BaranyOnn}.

The goal is to reduce Tverberg's theorem to the colorful Carath\'eodory theorem.

\begin{theorem}[Colorful Carath\'eodory; B\'ar\'any 1982 \cite{Barany:1982va}]\label{theorem-colorcarath}
Let $F_1, \ldots, F_{n+1}$ be sets of points in $\rr^n$.  If $0 \in \conv(F_i)$ for all $i=1, \ldots, n+1$, then we can choose points $x_1 \in F_1, \ldots, x_{n+1} \in F_{n+1}$ so that $0 \in \conv \{x_1, \ldots, x_{n+1}\}$.
\end{theorem}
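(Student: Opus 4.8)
The plan is to run B\'ar\'any's "closest-point" exchange argument. First I would reduce to the case where each $F_i$ is finite: since $0 \in \conv(F_i)$, Carath\'eodory's theorem gives a subset $F_i' \subseteq F_i$ of at most $n+1$ points with $0 \in \conv(F_i')$, and proving the claim for the $F_i'$ proves it for the $F_i$. With all sets finite there are only finitely many rainbow choices $(x_1, \ldots, x_{n+1}) \in F_1 \times \cdots \times F_{n+1}$, so I may select one for which the distance $\rho$ from the origin to $\conv\{x_1, \ldots, x_{n+1}\}$ is minimal; let $p$ be the (unique, by convexity and closedness) nearest point of that convex hull to $0$. The goal is to show $\rho = 0$.

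Suppose instead $\rho > 0$, so $p \ne 0$. The nearest-point characterization gives $\langle q - p, p \rangle \ge 0$ for every $q$ in the simplex, i.e. the hyperplane $H = \{x : \langle x, p\rangle = \langle p, p\rangle\}$ supports $\conv\{x_1, \ldots, x_{n+1}\}$ at $p$, the whole simplex lies in the closed halfspace $\{\langle x, p\rangle \ge \langle p, p\rangle\}$, and $0$ lies strictly outside it. Next I would locate an index $j$ with $p \in \conv(\{x_i : i \ne j\})$: the point $p$ lies in the face $H \cap \conv\{x_1, \ldots, x_{n+1}\}$, which is the convex hull of the $x_i$ lying on $H$; applying Carath\'eodory inside the $(n-1)$-dimensional hyperplane $H$ shows $p$ is a convex combination of at most $n$ of the $x_i$, so at least one index $j$ is unused. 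Finally, since $0 \in \conv(F_j)$ but $0$ is not in $\{\langle x, p\rangle \ge \langle p, p\rangle\}$, not every point of $F_j$ can lie in that halfspace, so there is $y \in F_j$ with $\langle y, p\rangle < \langle p, p\rangle$.

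Now I would form the new rainbow choice by replacing $x_j$ with $y$. Its convex hull contains both $p$ (since $p \in \conv(\{x_i : i \ne j\})$) and $y$, hence the whole segment $\{p + t(y-p) : t \in [0,1]\}$. Expanding, $\|p + t(y-p)\|^2 = \|p\|^2 + 2t(\langle p, y\rangle - \|p\|^2) + t^2\|y-p\|^2$, and the linear coefficient $2(\langle p, y\rangle - \|p\|^2)$ is negative, so for small $t > 0$ this is strictly below $\|p\|^2 = \rho^2$. That contradicts the minimality of $\rho$. Hence $\rho = 0$, i.e. $0 \in \conv\{x_1, \ldots, x_{n+1}\}$, as desired.

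The main obstacle is the exchange step: one must guarantee that swapping a single point yields a genuine rainbow selection whose convex hull is strictly closer to the origin. This works only because $p$ simultaneously survives in the convex hull of the other $n$ points (so the improving direction $y - p$ is available inside a rainbow convex hull) and lies on the supporting hyperplane orthogonal to $0p$ (so that the color class $F_j$ is forced to contain a point $y$ on the origin's side). The reduction to finite sets is needed precisely so that the minimum defining $\rho$ is attained.
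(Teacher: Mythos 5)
Your proof is correct and complete: it is exactly B\'ar\'any's original closest-point exchange argument, with all the delicate points handled properly (the reduction to finite color classes so the minimum is attained, the supporting-hyperplane characterization of the nearest point $p$, the Carath\'eodory step inside the $(n-1)$-dimensional hyperplane $H$ that frees up an index $j$, and the verification that the swapped tuple is again a rainbow selection with strictly smaller distance). The paper states this theorem as a known result of B\'ar\'any and does not prove it, so there is no in-paper argument to compare against; your write-up would serve as a valid self-contained proof.
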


The set $\{x_1, \ldots, x_{n+1}\}$ is called a \textit{transversal} of $\mathcal{F}=\{F_1, \ldots, F_{n+1}\}$.  Each set $F_i$ is called a \textit{color class}.  For the sake of brevity we do not reproduce Sarkaria's proof, but point out the main ingredients.  We distinguish between Tverberg-type results and colorful Carath\'eodory-type results by denoting the dimension of their ambient spaces by $d$ and $n$, respectively.

Let $X = \{x_1, \ldots, x_N\}$ be a set of points in $\rr^d$ and $r$ a positive integer.  We define $n = (d+1)(r-1)$.  Let $v_1, \ldots, v_r$ be the vertices of a regular simplex in $\rr^{r-1}$ centered at the origin.  We construct the points
\[
\bar{x}_{i,j} = (x_i,1) \otimes v_j \in \rr^{(d+1)(r-1)}= \rr^n,
\]
where $\otimes$ denotes the standard tensor product.  Given two vectors ${v_1} \in \rr^{d_1}, {v_2}\in \rr^{d_2}$, their tensor product ${v_1} \otimes {v_2}$ is simply the $d_1 \times d_2$ matrix $v_1v_2^{T}$ interpreted as a $d_1d_2$-dimensional vector.   These tensor products carry all the information about Tverberg partitions into $r$ parts.

\begin{lemma}\label{lemma-magic}
Let $X = \{x_1, \ldots, x_N\}$ be a finite set of points in $\rr^d$, $r$ be a positive integer.  Then, a partition $X_1, \ldots, X_r$ of $X$ is a Tverberg partition if and only if
\[
{0} \in \conv \{\bar{x}_{i,j} : i,j \ \mbox{are such that } x_i \in X_j\}
\]
\end{lemma}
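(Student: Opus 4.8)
The plan is to unwind the definitions and exploit the two defining properties of the simplex vertices $v_1,\dots,v_r$: first, that $\sum_{j=1}^r v_j = 0$ (the simplex is centered at the origin), and second, that $v_1,\dots,v_r$ are affinely independent, so that the only way to write $0$ as an affine combination of them is with all coefficients equal to $1/r$. These are the algebraic facts that make the tensor encoding faithful.

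First I would prove the ``only if'' direction. Suppose $X_1,\dots,X_r$ is a Tverberg partition, so there is a point $p \in \bigcap_{j=1}^r \conv X_j$. For each $j$ write $p = \sum_{i : x_i \in X_j} \lambda_{i,j} x_i$ with $\lambda_{i,j} \ge 0$ and $\sum_i \lambda_{i,j} = 1$. Then $(p,1) = \sum_{i : x_i \in X_j} \lambda_{i,j}(x_i,1)$ in $\rr^{d+1}$. Tensoring with $v_j$ and summing over $j$ gives
\[
\sum_{j=1}^r \sum_{i : x_i \in X_j} \lambda_{i,j}\, \bar{x}_{i,j} \;=\; \sum_{j=1}^r \bigl((p,1)\otimes v_j\bigr) \;=\; (p,1)\otimes\Bigl(\sum_{j=1}^r v_j\Bigr) \;=\; 0,
\]
using bilinearity of $\otimes$ and $\sum_j v_j = 0$. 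Since the coefficients $\lambda_{i,j}$ are nonnegative and their total sum is $\sum_{j=1}^r 1 = r > 0$, after rescaling by $1/r$ we have exhibited $0$ as a convex combination of the points $\{\bar{x}_{i,j} : x_i \in X_j\}$, which is exactly the claimed membership.

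For the ``if'' direction, suppose $0 = \sum_{j=1}^r \sum_{i : x_i \in X_j} \mu_{i,j}\, \bar{x}_{i,j}$ with $\mu_{i,j} \ge 0$ and $\sum_{i,j}\mu_{i,j} = 1$. Group the sum by color: let $w_j := \sum_{i : x_i \in X_j} \mu_{i,j}(x_i,1) \in \rr^{d+1}$, so that $\sum_{j=1}^r w_j \otimes v_j = 0$. The key step is to read off information from this tensor identity by projecting onto coordinates. Fixing any coordinate index of $\rr^{d+1}$ and letting $c_j$ denote that coordinate of $w_j$, the identity forces $\sum_{j=1}^r c_j v_j = 0$ in $\rr^{r-1}$; since $v_1,\dots,v_r$ are affinely independent and sum to zero, the only relation $\sum c_j v_j = 0$ with the $c_j$ summing to a prescribed value has all $c_j$ equal. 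Applying this to the last coordinate of $\rr^{d+1}$ (where each $(x_i,1)$ contributes a $1$), the numbers $c_j = \sum_{i : x_i\in X_j}\mu_{i,j}$ are all equal; call the common value $t$. Then $rt = \sum_{i,j}\mu_{i,j} = 1$, so $t = 1/r > 0$, which in particular shows each $X_j$ is nonempty and each $\sum_{i : x_i \in X_j} \mu_{i,j} = 1/r$. Applying the same ``all coordinates equal'' conclusion to the remaining $d$ coordinates of $\rr^{d+1}$ shows that the vectors $w_j$ are all equal; writing $w_j = (1/r)\,(q, 1)$ for a common $q \in \rr^d$, we get $q = \sum_{i : x_i \in X_j} r\mu_{i,j}\, x_i$ with $\sum_i r\mu_{i,j} = 1$, so $q \in \conv X_j$ for every $j$ and $X_1,\dots,X_r$ is a Tverberg partition.

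The only slightly delicate point is the linear-algebra step: extracting, from the single tensor equation $\sum_j w_j \otimes v_j = 0$, the coordinatewise relations $\sum_j c_j v_j = 0$ and then invoking affine independence of the simplex vertices. This is routine but is where all the geometry is hiding, so I would state it as a short self-contained observation (the map $w \mapsto w \otimes v_j$ is linear, and the $\rr^{d+1}$-coordinate projections of a tensor equation are themselves valid equations) before assembling the two directions. Everything else is bookkeeping with convex-combination coefficients and the harmless rescaling by $r$.
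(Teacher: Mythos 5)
Your proof is correct and is exactly the standard Sarkaria computation: the paper itself gives no proof of Lemma \ref{lemma-magic}, deferring to the reference \cite{baranytensors}, and the argument there is the same one you give (use $\sum_j v_j = 0$ for the forward direction, and for the converse project the tensor identity onto coordinates and use that the only linear relations among the $v_j$ are multiples of the all-ones relation). Both directions, including the check that each part $X_j$ is nonempty, are handled correctly.
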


A lucid explanation of the lemma above can be found in \cite{baranytensors}.  Lemma \ref{lemma-magic} implies that, given $X$, if we consider the sets
\[
F_i =\{\bar{x}_{i,j} : j=1,\ldots, r\} \qquad i=1,\ldots,N,
\]
then finding a Tverberg partition of $X$ into $r$ parts corresponds to finding a transversal of $\mathcal{F}=\{F_1, \ldots, F_N\}$ whose convex hull contains the origin in $\rr^n$.   Since $0 \in \conv F_i$ for each $i$, Theorem \ref{theorem-colorcarath} or a variation can be applied.  Then, by Lemma \ref{lemma-magic}, we obtain a Tverberg partition.

For transversals, there is also a natural notion of restriction.  Given a family $\mathcal{F}$ of sets in $\rr^n$, $\mathcal{G} \subset \mathcal{F}$, and $P$ a transversal of $\mathcal{F}$, we define
\[
P(\mathcal{G})=\{x \in P: x \mbox{ came from a set in } \mathcal{G}\}.
\]

Alternatively, $P(\mathcal{G}) = P \cap (\cup \mathcal{G})$.  In order to prove Theorem \ref{theorem-main}, it is sufficient to prove the following.

\begin{theorem}\label{theorem-epsilon-caratheodory}
Let $r, n$ be positive integers and $1\ge \varepsilon >0$ a real number.  Then, there is an integer $m=m(\varepsilon, r)$ such that the following is true.  For every sufficiently large $N$, if we are given a family $\mathcal{F}$ of $N$ sets in $\rr^n$, such that $0 \in \conv F$ and $|F|=r$ for all $F \in \mathcal{F}$, then there are $m$ transversals $P_1, \ldots, P_m$ of $\mathcal{F}$ with the following property.  For every $\mathcal{G} \subset \mathcal{F}$ with $|\mathcal{G}| \ge \varepsilon|\mathcal{F}|$ there is a $k$ with $0 \in \conv P_k(\mathcal{G})$.

Moreover, we have 
\[
m(\varepsilon,r) = \left\lfloor \frac{\ln\left(\frac{1}{\varepsilon}\right)}{\ln\left(\frac{r}{r-1}\right)}\right\rfloor + 1.
\]
\end{theorem}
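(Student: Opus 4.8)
The plan is to prove Theorem~\ref{theorem-epsilon-caratheodory} by the probabilistic method, constructing the transversals $P_1, \ldots, P_m$ independently and uniformly at random and estimating the probability that some ``bad'' subfamily $\mathcal{G}$ is missed by all of them. First I would set $m = \lfloor \ln(1/\varepsilon)/\ln(r/(r-1))\rfloor + 1$, so that $((r-1)/r)^m < \varepsilon$. For each $k = 1, \ldots, m$, build $P_k$ by choosing, independently for each $F \in \mathcal{F}$, a uniformly random point of $F$; since $|F| = r$, each point of $F$ is chosen with probability $1/r$. Given $\mathcal{G} \subset \mathcal{F}$, the key observation (this is the new ingredient the introduction advertises) is that the question ``is $0 \in \conv P_k(\mathcal{G})$?'' for the random transversal $P_k$ restricted to $\mathcal{G}$ is governed by a colorful Carath\'eodory argument: by a result in the style of \cite{soberon2016robust}, a single random transversal of a family of $\ge \varepsilon N$ color classes captures the origin in its convex hull with probability bounded away from $0$ as $N \to \infty$, because with high probability enough of the color classes are ``spread out'' and B\'ar\'any's theorem applies. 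More precisely I would show that for a fixed $\mathcal{G}$ with $|\mathcal{G}| \ge \varepsilon N$, the probability that $0 \notin \conv P_k(\mathcal{G})$ is roughly $((r-1)/r)^{|\mathcal{G}|/??}$ — the precise exponent is what makes the bound $((r-1)/r)^m$ work out after union-bounding.

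The heart of the matter, and the reason the theorem is not just a soft probabilistic statement, is controlling the union bound. A naive union over all $2^N$ subfamilies $\mathcal{G}$ would be hopeless. Instead I would use the observation flagged in the introduction: fix the $m$ transversals $P_1, \ldots, P_m$; call $\mathcal{G}$ \emph{bad} if $0 \notin \conv P_k(\mathcal{G})$ for every $k$. If $\mathcal{G}$ is bad, then for each $k$ there is a hyperplane $H_k$ through $0$ with $P_k(\mathcal{G})$ strictly on one side; equivalently, each $F \in \mathcal{G}$ has its $P_k$-representative on the ``wrong'' side of $H_k$. The set of containment-maximal bad $\mathcal{G}$ is therefore determined by a bounded number of linear constraints: for a fixed choice of the $m$ transversals, a maximal bad set is an intersection of at most $m$ open halfspaces' worth of combinatorial data, and there are only polynomially many (in $N$, with exponent depending on $n$ and $m$ but not on $N$) such candidate maximal bad sets, by a standard VC-dimension / arrangement-of-hyperplanes counting. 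So instead of union-bounding over all $\mathcal{G}$, I only need to argue that the \emph{largest} bad $\mathcal{G}$ has size $< \varepsilon N$ with positive probability — and it suffices to union-bound over the polynomially many candidate maximal bad sets.

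Concretely, the steps in order would be: (i) record the randomized construction and the $1/r$ selection probability; (ii) prove a single-transversal lemma: for a fixed subfamily $\mathcal{H}$, $\Pr[0 \notin \conv P_k(\mathcal{H})] \le ((r-1)/r)^{c|\mathcal{H}|}$ for an appropriate $c$, via the dual halfspace description plus an anticoncentration/spreading estimate that requires $|\mathcal{H}|$ to be large (this is where the ``sufficiently large $N$'' and the $\approx mdr^3(\varepsilon - ((r-1)/r)^m)^{-2}$ threshold enter, through a Chernoff bound); (iii) the polynomial-cardinality lemma for maximal bad sets; (iv) combine (ii) and (iii): the expected number of candidate maximal bad sets of size $\ge \varepsilon N$ that are actually bad is at most $\mathrm{poly}(N) \cdot ((r-1)/r)^{c \varepsilon N}$, which tends to $0$ since $\varepsilon > ((r-1)/r)^m$ forces the geometric term to beat the polynomial; (v) conclude existence of a good choice of $P_1, \ldots, P_m$, and handle the matching lower bound showing $m-1$ transversals do not suffice (by exhibiting a configuration — e.g.\ points in convex position, or a perturbation thereof — where too few random-like partitions necessarily leave a large subset un-Tverberged, matching $\varepsilon \le ((r-1)/r)^{m-1}$). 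I expect step~(ii), pinning down the exact constant $c$ so that it interacts correctly with the floor function defining $m(\varepsilon,r)$, together with step~(iii)'s polynomial bound on maximal bad sets, to be the main obstacles; the rest is assembling Chernoff estimates.
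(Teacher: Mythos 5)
Your high-level ingredients match the paper's (random transversals chosen uniformly within each color class, the halfspace dualization of ``$0\in\conv$'' from Lemma~\ref{lemma-halfspace}, a polynomial count of candidate maximal bad sets, and Hoeffding/Chernoff concentration), but the way you assemble them in steps (ii) and (iv) contains a genuine error. Writing $y^k_F$ for the point of $F$ selected by $P_k$, note that for a fixed choice of transversals and a fixed $m$-tuple of halfspaces $H_1,\dots,H_m\in\mathcal{H}$, the candidate set $\mathcal{G}(H_1,\dots,H_m)=\{F: y^k_F\notin H_k \text{ for all } k\}$ is \emph{automatically} bad whenever it is nonempty: each $P_k(\mathcal{G}(H_1,\dots,H_m))$ lies entirely outside $H_k$, so its convex hull misses the origin. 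Hence the quantity you propose to bound in step (iv) --- the expected number of candidate maximal bad sets that ``are actually bad'' --- is not the right one; all candidates are bad, and what must be shown is that with positive probability \emph{every} candidate has size $<\varepsilon N$. Moreover, your intended estimate $\mathrm{poly}(N)\cdot((r-1)/r)^{c\varepsilon N}\to 0$ holds for every $\varepsilon>0$ and every $c>0$ independently of $m$, so if the argument were valid it would prove that a single transversal always suffices, contradicting the lower bound. The value of $m$ never enters your union bound, which is the telltale sign of the gap.

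The correct assembly is: each $F$ survives into $\mathcal{G}(H_1,\dots,H_m)$ with probability at most $((r-1)/r)^m$, by independence over the $m$ transversals and because $0\in\conv F$ forces $\mathbb{P}[y^k_F\in H_k]\ge 1/r$; thus $\mathbb{E}\left[|\mathcal{G}(H_1,\dots,H_m)|\right]\le N((r-1)/r)^m$, and Hoeffding plus a union bound over the $A^m$ halfspace tuples (with $A=(Nr)^n$ from Lemma~\ref{lemma-halfspace}) shows that with positive probability every candidate has size at most $N((r-1)/r)^m+O(\sqrt{mN\ln A})<\varepsilon N$ once $N$ is large. This is where $\varepsilon>((r-1)/r)^m$, i.e.\ the stated value of $m$, is used. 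The paper runs this derandomization one transversal at a time, maintaining a family $\gimel_k$ of at most $A^k$ supersets of all bad sets whose maximal size shrinks by a factor $1-1/r$ plus an additive error per round, but the one-shot version just described is essentially equivalent; your single-transversal lemma in step (ii) is not needed. Finally, the lower bound requires no special configuration: for an arbitrary point set and arbitrary partitions, iteratively delete the smallest part of each $P_k$ from the surviving points; after $m$ rounds at least $N((r-1)/r)^m$ points remain and each $P_k$ restricted to them has an empty part, hence is not a Tverberg partition.
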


Indeed, let us sketch how Theorem \ref{theorem-epsilon-caratheodory} implies Theorem \ref{theorem-main}.  

\begin{proof}
Assume $r,d,\varepsilon,m$ are given, satisfying the last equality of Theorem \ref{theorem-epsilon-caratheodory}. Let $n = (d+1)(r-1)+1$.  Assume that we are given a set $X$ of $N$ points in $\rr^d$, $X = \{x_1, \ldots, x_N\}$, where $N$ is a large positive integer.  For $v_1, \ldots, v_r \in \rr^{r-1}$ as before, we construct the sets
\[
F_i = \{(x_i,1) \otimes v_j : j = 1, \ldots, r\} \subset \rr^n.
\]
Then, we apply Theorem \ref{theorem-epsilon-caratheodory} to the family $\mathcal{F} = \{F_1, \ldots, F_N\}$ and find $m$ transversals $P_1, \ldots, P_m$.  Given a set of indices $I \subset [N]$ such that $|I| \ge \varepsilon N$, consider ${\mathcal{G}}_I = \{F_i : i \in I\}$.  Then, there must be a transversal $P_{i_0}$ such that ${0} \in \conv P_{i_0}({\mathcal{G}}_I)$.  By Lemma \ref{lemma-magic}, this means that the partition $\mathcal{P}_{i_0}$ of $X$ induced by $P_{i_0}$ is a Tverberg partition even when restricted to the set $X_I = \{x_i : i \in X\}$.  In other words, the partitions induced by $P_1, \ldots, P_m$ satisfy the conclusion of Theorem \ref{theorem-main}. 
\end{proof}

We also need the following lemma.  It bounds the complexity of verifying if $0 \in \conv Y$ if $Y \subset X$ and $X$ is given in advance.  For our purposes, we need a slightly weaker version than the one presented in \cite{soberon2016robust} (see also \cite{Clarksonradon}).

\begin{lemma}\label{lemma-halfspace}
Let $X \subset \rr^n$ be a finite set.  Then, there is a family $\mathcal{H}$ of $|X|^n$ half-spaces in $\rr^n$, each containing $0$, such that the following holds.  For every subset $Y \subset X$, we have $ 0 \in \conv Y$ if and only if $Y \cap H \neq \emptyset$ for all $H \in \mathcal{H}$. \qed
\end{lemma}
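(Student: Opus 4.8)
The plan is to use the standard separation/duality description of when the origin lies in a convex hull, and to observe that only finitely many ``combinatorially distinct'' separating half-spaces can occur when the candidate point set is drawn from a fixed finite $X$. By the separating hyperplane theorem (or Farkas' lemma), for $Y \subseteq X$ we have $0 \notin \conv Y$ if and only if there is a half-space $H$ with $0$ on its boundary (so $0 \in H$) that is disjoint from $Y$, equivalently $Y \cap H = \emptyset$; phrasing this in the contrapositive, $0 \in \conv Y$ if and only if $Y$ meets every such half-space. So the family we want is a witnessing set of half-spaces, one realizing each way a subset of $X$ can be separated from the origin.

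First I would fix the collection of relevant half-spaces. A half-space through the origin is determined by its outward normal $u \in \rr^n \setminus \{0\}$, namely $H_u = \{x : \langle u, x\rangle \le 0\}$, and $Y \cap H_u = \emptyset$ means $\langle u, x\rangle > 0$ for every $x \in Y$. What matters is only the \emph{sign pattern} $(\operatorname{sign}\langle u, x\rangle)_{x \in X}$, and in particular which points of $X$ lie strictly in the open half-space $\{x : \langle u,x\rangle > 0\}$. So it suffices to pick, for each achievable open-side set $S = \{x \in X : \langle u, x\rangle > 0\}$, one normal $u_S$ realizing it, and let $\mathcal H = \{H_{u_S}\}$. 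Next I would bound $|\mathcal H|$: the distinct sign patterns of the linear functionals $x \mapsto \langle u, x\rangle$ as $u$ ranges over $\rr^n$ is exactly the number of faces (in a suitable sense) of the arrangement of the $|X|$ hyperplanes $\{u : \langle u, x\rangle = 0\}$ in the dual space $\rr^n$. A crude but sufficient count: the number of distinct subsets of $X$ of the form $\{x : \langle u,x\rangle > 0\}$ is at most the number of regions of an arrangement of $|X|$ hyperplanes through the origin in $\rr^n$, which is at most $|X|^n$ (indeed one can be more precise, but $|X|^n$ is all we claim). Throwing away the patterns that are not achievable, and discarding any choice whose $S$ equals all of $X$ is unnecessary, we get $|\mathcal H| \le |X|^n$; if one wants exactly $|X|^n$ half-spaces one pads the family with repeats or with any trivial half-space containing $0$.

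Finally I would verify the equivalence with this concrete $\mathcal H$. If $0 \in \conv Y$, then for every half-space $H$ containing $0$ the set $Y$ cannot be contained in the complementary open half-space (else $\conv Y$ would be too), so $Y \cap H \neq \emptyset$; in particular this holds for all $H \in \mathcal H$. Conversely, suppose $0 \notin \conv Y$. By separation there is $u$ with $\langle u, x\rangle > 0$ for all $x \in Y$, i.e. $Y \cap H_u = \emptyset$. Let $S = \{x \in X : \langle u, x\rangle > 0\} \supseteq Y$; by construction $\mathcal H$ contains a half-space $H_{u_S}$ with open side exactly $S$, hence $Y \subseteq S$ is disjoint from $H_{u_S}$, so $Y$ fails to meet some member of $\mathcal H$. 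This proves the ``if and only if''. The only mildly delicate point — the step I expect to be the real content rather than bookkeeping — is the cardinality estimate: justifying that the number of realizable sign vectors of $|X|$ linear functionals on $\rr^n$ is at most $|X|^n$. This is a standard fact about hyperplane arrangements (the zero sets $\{u : \langle u, x \rangle = 0\}$ cut $\rr^n$ into at most $\sum_{i=0}^{n}\binom{|X|}{i} \le |X|^n$ full-dimensional cells, and finer faces only refine the relevant count in a way still bounded by $|X|^n$ for the purpose at hand), so I would cite it rather than reprove it, matching the reference to \cite{soberon2016robust, Clarksonradon} already mentioned in the text. Everything else is the routine separation argument above.
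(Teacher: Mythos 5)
Your proposal is correct and follows essentially the same route as the paper's own sketch: reduce to the separating-hyperplane characterization, group the (infinitely many) candidate half-spaces through the origin into finitely many equivalence classes according to which subset of $X$ they cut off, keep one representative per class, and bound the number of classes by the number of cells of the dual arrangement of $|X|$ hyperplanes in $\rr^n$. The only difference is that you spell out the sign-pattern bookkeeping and the cell count explicitly, which the paper leaves as a citation.
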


\begin{proof}[Sketch of proof]
$0$ belongs to $\conv Y$ if and only if there is no hyperplane separating $0$ from $Y$. There are infinitely many candidate hyperplanes, but they can be grouped into equivalence classes according to which subset of $X$ they separate from $0$. We just need one representative from each class. The number of such possible subsets is equal, under duality, to the number of cells into which $|X|$ hyperplanes partition $\rr^n$.
\end{proof}

\subsection{Hoeffding's inequality}

Our main probabilistic tool will be Hoeffding's inequality.

\begin{theorem}[Hoeffding 1963, \cite{Hoe63}]
	Given $n$ independent random variables $x_1, \ldots, x_N$ such that $0 \le x_i \le 1$, let $y=x_1 + \ldots + x_N$. For all $\lambda \ge 0$, we have
	\[
	\mathbb{P}\left[y < \mathbb{E}(y) - \lambda\right] < e^{-2\lambda^2/N}.
	\]
\end{theorem}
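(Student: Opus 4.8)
The plan is to use the exponential moment method (the Chernoff--Markov trick) together with the standard exponential bound for bounded centered random variables, so the whole argument splits into two parts: a routine optimization and one genuine inequality (Hoeffding's lemma).

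First I would recenter. Put $z_i = x_i - \mathbb{E}(x_i)$ and $S = z_1 + \cdots + z_N = y - \mathbb{E}(y)$, so the claim becomes $\mathbb{P}[S < -\lambda] < e^{-2\lambda^2/N}$. Since $x_i \in [0,1]$, each $z_i$ satisfies $\mathbb{E}(z_i) = 0$ and takes values in an interval $[a_i,b_i]$ with $b_i - a_i = 1$ (note $a_i \le 0 \le b_i$ because $z_i$ has mean zero). The case $\lambda = 0$ is immediate, since $\mathbb{P}[S < 0] < 1$ follows from $\mathbb{E}(-S) = 0$; so assume $\lambda > 0$. For any $t>0$ we have $\{S < -\lambda\} \subseteq \{e^{-tS} \ge e^{t\lambda}\}$, and Markov's inequality together with independence gives
\[
\mathbb{P}[S < -\lambda] \;\le\; e^{-t\lambda}\,\mathbb{E}\!\left(e^{-tS}\right) \;=\; e^{-t\lambda}\prod_{i=1}^{N}\mathbb{E}\!\left(e^{-t z_i}\right).
\]

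The crux is Hoeffding's lemma: if $\mathbb{E}(z) = 0$ and $z \in [a,b]$, then $\mathbb{E}(e^{sz}) \le e^{s^2(b-a)^2/8}$ for all $s \in \rr$. I would prove it by bounding $e^{sz} \le \tfrac{b-z}{b-a}e^{sa} + \tfrac{z-a}{b-a}e^{sb}$ pointwise (convexity of $u \mapsto e^{su}$), taking expectations and using $\mathbb{E}(z)=0$ to get $\mathbb{E}(e^{sz}) \le \tfrac{b e^{sa} - a e^{sb}}{b-a} = e^{L(u)}$, where $u = s(b-a)$, $p = -a/(b-a) \in [0,1]$, and $L(u) = -pu + \ln(1 - p + p e^{u})$. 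One then checks $L(0) = L'(0) = 0$ and $L''(u) = q(1-q) \le \tfrac14$ with $q = \tfrac{pe^u}{1-p+pe^u}$, so Taylor's theorem yields $L(u) \le u^2/8$. Applying this with $s = -t$ and $b_i - a_i = 1$ gives $\mathbb{E}(e^{-tz_i}) \le e^{t^2/8}$, hence $\mathbb{P}[S < -\lambda] \le \exp(-t\lambda + Nt^2/8)$.

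Finally, minimizing the quadratic $-t\lambda + Nt^2/8$ over $t>0$ at $t = 4\lambda/N$ produces exponent $-2\lambda^2/N$, so $\mathbb{P}[S<-\lambda] \le e^{-2\lambda^2/N}$. To upgrade this to the strict inequality stated, I would note that equality throughout would force equality in the Markov step, which (since $e^{-tS}>0$) forces $e^{-tS} \equiv e^{t\lambda}$, i.e. $S \equiv -\lambda$ almost surely; this contradicts $\mathbb{E}(S) = 0$ because $\lambda > 0$. Hence some inequality is strict and $\mathbb{P}[y < \mathbb{E}(y) - \lambda] < e^{-2\lambda^2/N}$. The only real obstacle is Hoeffding's lemma — in particular the bound $L''(u) \le 1/4$ — while the recentering, the Chernoff step, the use of independence, and the optimization in $t$ are all routine.
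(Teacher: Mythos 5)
Your proof is correct, but it takes a different route from the paper: the paper does not prove this statement at all --- it is quoted as a known result of Hoeffding, and the only ``proof content'' in the paper is the remark that Hoeffding's original inequality bounds the upper tail $\mathbb{P}[y > \mathbb{E}(y)+\lambda]$, from which the stated lower-tail bound follows by the substitution $z_i = 1 - x_i$. You instead reprove the inequality from scratch via the exponential moment method: recentering, the Chernoff--Markov step with independence, Hoeffding's lemma $\mathbb{E}(e^{sz}) \le e^{s^2(b-a)^2/8}$ proved by the convexity bound and the estimate $L''(u) = q(1-q) \le 1/4$, and the optimization $t = 4\lambda/N$ giving the exponent $-2\lambda^2/N$; all of these steps, including the separate treatment of $\lambda = 0$ and the ranges $b_i - a_i = 1$, check out. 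Your approach buys self-containment, and in particular your equality analysis of the Markov step (tightness would force $S \equiv -\lambda$ almost surely, contradicting $\mathbb{E}(S)=0$) justifies the strict inequality ``$<$'' exactly as the paper states it, whereas the classical statement is usually given with ``$\le$''; the paper's approach buys brevity, since for its application the distinction between the two tails (and between strict and non-strict inequality) is immaterial and is handled by the one-line symmetry observation.
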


The expert reader may know that Hoeffding proved a slightly different inequality: $\mathbb{P}\left[y >\mathbb{E}(y) + \lambda\right] < e^{-2\lambda^2/N}$.  It suffices to apply the inequality to the variables $z_i = 1 - x_i$ to obtain the other bound.  This is a special case of Azuma's inequality (with a slightly different constant in the exponent, which would not change the main result significantly) \cite{Azu67}.  These inequalities carry at their heart the central limit theorem, which is why such an exponential decay is expected in the tails of the distribution.  See \cite{alonspencer} for references on the subject.

\section{Proof of Theorem \ref{theorem-epsilon-caratheodory}}\label{section-proof}
\begin{proof}	

We first prove that $\varepsilon > ((r-1)/r)^m$ is necessary for Theorem \ref{theorem-main}, which also implies the lower bound for Theorem \ref{theorem-epsilon-caratheodory}.  Given $N$ points in $\rr^d$ and $m$ partitions $P_1, \ldots, P_m$, of them, let us find a subset of size greater than or equal to $N((r-1)/r)^m$ in which no $P_k$ induces a Tverberg partition.  First, notice that one of the parts of $P_1$ must have at most $N/r$ points.  If we remove them, then there are at least $N(1-1/r)$ points left.  We can repeat the same argument, and, among the points we have left, one of the parts induced by $P_2$ must have at most a $(1/r)$-fraction of them.  Removing those leaves us with at least $N(1-1/r)^2$ points.  We proceed this way and end up with a set ${Y}$ of at least $N(1-1/r)^m$ points, such that $P_k({Y})$ has at least one empty component for each $k = 1,\ldots, m$.  Therefore, none of these is a Tverberg partition.

Assume now that $\varepsilon > ((r-1)/r)^m$.  We want to prove that there are $m$ transversals as the theorem required.  We choose (with foresight) $A = (Nr)^n$, and $\lambda > \sqrt{m N \ln A}$.  Define a sequence $N_0, N_1, \ldots$ by $N_0 = N$ and $N_k = N_{k-1}(1-1/r) + \lambda$ for $k \ge 1$.  If we apply Lemma \ref{lemma-halfspace} to $\cup \mathcal{F}$, we obtain a family $\mathcal{H}$ of $A$ halfspaces, all containing $0$, which are enough to check if the convex hulls of the transversals we construct contain $0$.

We consider each $F \in \mathcal{F}$ as a color class.  For $k=1, \ldots, m$, we will construct $P_k$ and a family $\gimel_k$ of sets of color classes such that the following properties hold:
\begin{itemize}
	\item given $\mathcal{G} \subset \mathcal{F}$ such that $0 \not\in \conv(P_{k'}(\mathcal{G}))$ for all $k'=1,\ldots,k$, there must be a $\mathcal{V} \in \gimel_k$ such that $\mathcal{G} \subset \mathcal{V}$,
	\item if $\mathcal{V} \in \gimel_k$, then $|\mathcal{V}|\le N_k$, and
	\item $|\gimel_k| \le A^k$.
\end{itemize}
We can consider $\gimel_0 = \{\mathcal{F}\}$.  We construct $P_k$ inductively, assuming $\gimel_{k-1}$ and $P_{k'}$ have been constructed for $k' < k$.  We start by choosing $P_k$ randomly.  For each $F \in \ff$, we pick $y_F^k \in F$ uniformly and independently.  Then, we denote $P_k = \{y^k_F : F \in \ff\}$.

Given a half-space $H \in \mathcal{H}$, consider the random variable

\[
x^k_F(H) = \begin{cases}
1 & \mbox{if } y^k_F \in H \\
0 & \mbox{otherwise }
\end{cases}
\]

Since $0 \in \conv(F)$, we know that $\mathbb{E}(x^k_F(H)) \ge 1/r$.  By linearity of expectation, for each $\mathcal{V} \in \gimel_{k-1}$ we have
\[
\mathbb{E}\left[\sum_{F \in \mathcal{V}}x^k_F(H)\right]  \ge \frac{1}{r}|\mathcal{V}|.
\]
Since all variables $x^k_F(H), x^k_{F'}(H)$ are independent for $F \neq F'$, Hoeffding's inequality gives
\[
\mathbb{P}\left[\sum_{F \in \mathcal{V}}x^k_F(H) < \frac{|\mathcal{V}|}{r}-\lambda\right]< e^{-2\lambda^2/|\mathcal{V}|} \le e^{-2\lambda^2/N}
\]
Therefore the union bound gives
\begin{align*}
\mathbb{P}\left[\exists H \in \mathcal{H} \ \exists \mathcal{V} \in \gimel_{k-1}\mbox{ such that } \sum_{F \in \mathcal{V}}x^k_F(H) < \frac{|\mathcal{V}|}{r}-\lambda\right]  & \le A \cdot |\gimel_{k-1}| \cdot e^{-2\lambda^2/	N} \\
 & \le A^k e^{-2\lambda^2/N} < 1
\end{align*}
by the choice of $\lambda$.

Therefore, there is a choice of $P_k$ such that for all $\mathcal{V} \in \gimel_{k-1}$ and all half-spaces $H \in \mathcal{H}$, we have 
\[
\sum_{F \in \mathcal{V}}x^k_F(H) \ge \frac{|\mathcal{V}|}{r}-\lambda.
\]
We fix $P_k$ to be this choice.  We are ready to construct $\gimel_k$.  For each $\mathcal{V} \in \gimel_{k-1}$ and each half-space $H \in \mathcal{H}$, we construct the set $\mathcal{V}' = \{F \in \mathcal{V} : x^k_F(H) = 0\}$.  We call $\gimel_k$ to the family of all sets that can be formed this way.  Let us prove that $\gimel_k$ satisfies all the desired properties.

\begin{claim}
Given $\mathcal{G} \subset \mathcal{F}$ such that $0 \not\in \conv(P_{k'}(\mathcal{G}))$ for all $k'=1,\ldots,k$, there must be a $\mathcal{V}' \in \gimel_k$ such that $\mathcal{G} \subset \mathcal{V}'$.
\end{claim}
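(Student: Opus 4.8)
The plan is to prove the claim by induction on $k$, invoking the inductive properties of $\gimel_{k-1}$ together with Lemma \ref{lemma-halfspace} applied to the freshly constructed transversal $P_k$. So suppose $\mathcal{G} \subset \mathcal{F}$ satisfies $0 \notin \conv(P_{k'}(\mathcal{G}))$ for all $k' = 1, \ldots, k$. In particular the hypothesis holds for all $k' \le k-1$, so by the first inductive property of $\gimel_{k-1}$ there is some $\mathcal{V} \in \gimel_{k-1}$ with $\mathcal{G} \subset \mathcal{V}$. This reduces the task to ``peeling off one more layer'' using the failure at level $k$.

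Next I would extract a witnessing half-space from the condition $0 \notin \conv(P_k(\mathcal{G}))$. Since $P_k(\mathcal{G}) = \{y^k_F : F \in \mathcal{G}\} \subset \cup\mathcal{F}$, and $\mathcal{H}$ is precisely the family of half-spaces produced by Lemma \ref{lemma-halfspace} applied to the ground set $\cup\mathcal{F}$ (here $|\cup\mathcal{F}| \le Nr$, so $|\mathcal{H}| \le (Nr)^n = A$, which is why that value of $A$ was chosen), the lemma guarantees a half-space $H \in \mathcal{H}$ with $P_k(\mathcal{G}) \cap H = \emptyset$. Unwinding definitions, this means $y^k_F \notin H$ for every $F \in \mathcal{G}$, i.e.\ $x^k_F(H) = 0$ for every $F \in \mathcal{G}$.

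Finally I would combine the two facts: $\mathcal{G} \subset \mathcal{V}$ and $x^k_F(H) = 0$ for all $F \in \mathcal{G}$ give $\mathcal{G} \subset \{F \in \mathcal{V} : x^k_F(H) = 0\} =: \mathcal{V}'$, and $\mathcal{V}'$ is exactly one of the sets placed into $\gimel_k$ during its construction. This establishes the claim. While at it, I would dispatch the remaining two bullets for $\gimel_k$: for cardinality, using that we fixed $P_k$ so that $\sum_{F\in\mathcal{V}} x^k_F(H) \ge |\mathcal{V}|/r - \lambda$, any $\mathcal{V}' \in \gimel_k$ coming from $\mathcal{V} \in \gimel_{k-1}$ satisfies $|\mathcal{V}'| = |\mathcal{V}| - \sum_{F\in\mathcal{V}} x^k_F(H) \le |\mathcal{V}|(1-1/r) + \lambda \le N_{k-1}(1-1/r)+\lambda = N_k$; and for the count, $|\gimel_k| \le |\gimel_{k-1}|\cdot|\mathcal{H}| \le A^{k-1}\cdot A = A^k$.

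I do not expect a genuine obstacle here — the argument is essentially bookkeeping once the inductive hypothesis is set up correctly. The one point that needs care is making sure Lemma \ref{lemma-halfspace} is invoked with the ground set $\cup\mathcal{F}$ so that the half-space family it returns is literally the $\mathcal{H}$ fixed at the outset, and that $A$ is large enough to dominate $|\mathcal{H}|$; after that, the containment $\mathcal{G}\subset\mathcal{V}'$ is immediate and the other two properties follow from the defining recursion $N_k = N_{k-1}(1-1/r)+\lambda$ and from $|\gimel_k|\le |\gimel_{k-1}|\cdot A$.
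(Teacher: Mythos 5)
Your argument is correct and coincides with the paper's own proof: invoke the inductive property of $\gimel_{k-1}$ to get $\mathcal{V} \supset \mathcal{G}$, use Lemma \ref{lemma-halfspace} (applied to $\cup\mathcal{F}$, which is exactly how $\mathcal{H}$ was fixed) to extract a half-space $H \in \mathcal{H}$ disjoint from $P_k(\mathcal{G})$, and observe that $\mathcal{G} \subset \{F \in \mathcal{V} : x^k_F(H) = 0\} \in \gimel_k$. Your additional verification of the cardinality and counting bullets also matches the paper's subsequent claims.
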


\begin{proof}
If $0 \not\in \conv(P_{k'}(\mathcal{G}))$ for all $k'=1,\ldots,k$, we already know that there must be a $\mathcal{V} \in \gimel_{k-1}$ such that $\mathcal{G} \subset \mathcal{V}$.  Since $0 \not\in \conv(P_k(\mathcal{G}))$, there must be a half-space $H\in \mathcal{H}$ containing $0$ such that $x^k_F(H) = 0$ for all $F \in \mathcal{G}$.  Therefore, there is a $\mathcal{V}' \in \gimel_k$ with $\mathcal{G} \subset \mathcal{V'}$.
\end{proof}

\begin{claim}
If $\mathcal{V'} \in \gimel_k$, then $|\mathcal{V'}|\le N_k$.
\end{claim}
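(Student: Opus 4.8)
The plan is to unwind the definition of $\gimel_k$ and count. By construction, every $\mathcal{V}' \in \gimel_k$ has the form $\mathcal{V}' = \{F \in \mathcal{V} : x^k_F(H) = 0\}$ for some $\mathcal{V} \in \gimel_{k-1}$ and some half-space $H \in \mathcal{H}$. Since each $x^k_F(H)$ is $0$ or $1$, the set $\mathcal{V}'$ collects exactly those $F \in \mathcal{V}$ on which the indicator vanishes, so $|\mathcal{V}'| = |\mathcal{V}| - \sum_{F \in \mathcal{V}} x^k_F(H)$. The whole argument then reduces to bounding this quantity.

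First I would use the inductive hypothesis on $\gimel_{k-1}$, which gives $|\mathcal{V}| \le N_{k-1}$; for $k=1$ this is the base case $\gimel_0 = \{\mathcal{F}\}$ with $|\mathcal{F}| = N = N_0$. Next I would use the defining property of the chosen $P_k$: it was fixed precisely so that the bad event from the previous paragraph does not occur, i.e.
\[
\sum_{F \in \mathcal{V}} x^k_F(H) \ge \frac{|\mathcal{V}|}{r} - \lambda \qquad \text{for all } \mathcal{V} \in \gimel_{k-1}, \ H \in \mathcal{H}.
\]
Combining the two, and using $1 - 1/r > 0$ together with $|\mathcal{V}| \le N_{k-1}$,
\[
|\mathcal{V}'| \le |\mathcal{V}| - \left(\frac{|\mathcal{V}|}{r} - \lambda\right) = |\mathcal{V}|\left(1 - \frac{1}{r}\right) + \lambda \le N_{k-1}\left(1 - \frac{1}{r}\right) + \lambda = N_k,
\]
the last step being the recursion defining the sequence $(N_k)$. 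This is exactly the asserted bound.

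There is no genuine obstacle here: the content was already spent in setting up $P_k$ and $\gimel_{k-1}$. The only point worth care is the direction of Hoeffding's inequality — we need a lower bound on $\sum_{F \in \mathcal{V}} x^k_F(H)$, equivalently an upper bound on the number of $F \in \mathcal{V}$ with $y^k_F \notin H$ — and the fact that this bound holds simultaneously over all pairs $(\mathcal{V}, H)$ with $\mathcal{V} \in \gimel_{k-1}$ and $H \in \mathcal{H}$. That uniformity is what the union bound $A \cdot |\gimel_{k-1}| \cdot e^{-2\lambda^2/N} \le A^k e^{-2\lambda^2/N} < 1$ guaranteed, so such a $P_k$ exists and the count above is justified.
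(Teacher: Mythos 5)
Your proof is correct and follows essentially the same route as the paper: write $|\mathcal{V}'| = \sum_{F \in \mathcal{V}}(1-x^k_F(H))$, apply the deterministic lower bound $\sum_{F \in \mathcal{V}} x^k_F(H) \ge |\mathcal{V}|/r - \lambda$ guaranteed by the choice of $P_k$, and then use $|\mathcal{V}| \le N_{k-1}$ and the recursion for $N_k$. You simply make explicit the inductive hypothesis and the role of the union bound, which the paper leaves implicit.
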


\begin{proof}
Let $\mathcal{V} \in \gimel_{k-1}$, $H\in \mathcal{H}$ be the family and half-space that defined $\mathcal{V'}$, respectively.  Then,
\[
|\mathcal{V}'| = \sum_{F \in \mathcal{V}}(1-x^k_F(H)) \le |\mathcal{V}|\left(1-\frac{1}{r}\right) + \lambda \le N_{k-1}\left(1-\frac{1}{r}\right)+ \lambda = N_k.
\]
\end{proof}

\begin{claim}
We have $|\gimel_k| \le A^k$.
\end{claim}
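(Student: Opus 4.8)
The plan is a one-line induction on $k$, mirroring the recursive way $\gimel_k$ was built. For the base case, recall that $\gimel_0 = \{\mathcal{F}\}$, so $|\gimel_0| = 1 = A^0$, and the inductive hypothesis $|\gimel_{k-1}| \le A^{k-1}$ is available.

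For the inductive step, I would observe that every member of $\gimel_k$ arises, by construction, as $\mathcal{V}' = \{F \in \mathcal{V} : x^k_F(H) = 0\}$ for some pair $(\mathcal{V}, H)$ with $\mathcal{V} \in \gimel_{k-1}$ and $H \in \mathcal{H}$. Hence the assignment $(\mathcal{V}, H) \mapsto \mathcal{V}'$ is a surjection from $\gimel_{k-1} \times \mathcal{H}$ onto $\gimel_k$, which gives
\[
|\gimel_k| \le |\gimel_{k-1}| \cdot |\mathcal{H}| \le A^{k-1} \cdot |\mathcal{H}|.
\]
It remains only to check that $|\mathcal{H}| \le A$. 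This is exactly the point where the choice $A = (Nr)^n$ is used: Lemma \ref{lemma-halfspace} was applied to the finite set $\cup \mathcal{F}$, which has at most $Nr$ points (there are $N$ color classes, each of size $r$), so it produces a family of at most $(Nr)^n = A$ half-spaces. Combining the two displays yields $|\gimel_k| \le A^{k-1}\cdot A = A^k$, completing the induction.

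There is no real obstacle here; the content of the proof is entirely in the earlier claims (the covering property and the size bound $|\mathcal V'|\le N_k$) and in the probabilistic selection of $P_k$. The only thing worth flagging is bookkeeping: one must make sure the bound $|\mathcal{H}| \le A$ is the same constant $A$ that appears in the Hoeffding union bound above, which is why $A$ was fixed as $(Nr)^n$ from the outset.
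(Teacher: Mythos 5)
Your proof is correct and is essentially the paper's own argument: the paper simply writes $|\gimel_k| \le |\gimel_{k-1}|\cdot A \le A^k$ by construction, which is exactly your surjection from $\gimel_{k-1}\times\mathcal{H}$ plus induction. Your extra check that $|\mathcal{H}| = |{\cup\mathcal{F}}|^n \le (Nr)^n = A$ is a worthwhile piece of bookkeeping that the paper leaves implicit.
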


\begin{proof}
By construction, $|\gimel_k| \le |\gimel_{k-1}| \cdot A \le A^k$.
\end{proof}

This concludes the construction of $P_1, \ldots, P_m$.  

If $\mathcal{G} \subset \mathcal{F}$ is such that $0 \not \in \conv P_k (\mathcal{G})$ for $k=1, \ldots, m$, then there must be a $\mathcal{V} \in \gimel_m$ such that $\mathcal{G} \subset \mathcal{V}$.

Recall that $m$ was chosen so that $((r-1)/r)^m < \varepsilon$.  Therefore
\[
|\mathcal{G}| \le |\mathcal{V}| \le N_m \le N\left(\frac{r-1}{r}\right)^m + r\lambda < \varepsilon N,
\]
where the last inequality holds if $N$ is large enough, as $\lambda = O(\sqrt{N\ln N})$.
\end{proof}
\section{A Colorful version}\label{section-colored}

Another important variation of Tverberg's theorem is the following conjecture by B\'ar\'any and Larman.

\begin{conjecture}[Colorful Tverberg; B\'ar\'any, Larman 1992 \cite{Barany:1992tx}]
	For any given $d+1$ sets $F_1, \ldots, F_{d+1}$ of $r$ points each in $\rr^d$, there is a Tverberg partition $X_1, \ldots, X_r$ of their union such that for all $i,j$ we have $|F_i \cap X_j|=1$.
\end{conjecture}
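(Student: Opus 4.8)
\medskip\noindent\emph{A proof strategy.} The conjecture is known for $r=2$ (Lov\'asz, via the Borsuk--Ulam theorem), for $d=2$ (B\'ar\'any--Larman), and for every $d$ when $r+1$ is prime (Blagojevi\'c--Matschke--Ziegler); the case of general $r$ is open, and the line of attack I would take is the configuration space / test map paradigm together with equivariant topology. Write $X=F_1\cup\dots\cup F_{d+1}$, with $|F_i|=r$. A rainbow partition $X_1,\dots,X_r$ of $X$ is the same data as a choice, for each color class $F_i$, of a bijection from its $r$ points to the $r$ blocks, so the complex of (weighted, possibly partial) rainbow partitions is the $(d{+}1)$-fold join $\mathcal{K}=\Delta_{r,r}^{*(d+1)}$ of chessboard complexes, carrying the action of the symmetric group $\mathfrak{S}_r$ that permutes the block labels. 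Note $\dim\mathcal{K}=(d+1)(r-1)+d=r(d+1)-1$.

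Given a point configuration $f\colon X\to\rr^d$, the Tverberg condition becomes a test map: send a weighted rainbow partition to the $r$-tuple of barycenters of its blocks. If no rainbow Tverberg partition exists, this tuple never lies on the thin diagonal of $(\rr^d)^r$, so after projecting off the diagonal and normalizing one gets an $\mathfrak{S}_r$-equivariant map $\mathcal{K}\to S\big(W_r^{\oplus d}\big)$, where $W_r=\{x\in\rr^r:\sum_i x_i=0\}$ and the target sphere has dimension $d(r-1)-1$. Thus it suffices to show that no such equivariant map exists. The plan for the non-existence is to compare the connectivity of $\mathcal{K}$ (from the chessboard-complex connectivity of Bj\"orner--Lov\'asz--Vre\'cica--\v{Z}ivaljevi\'c, boosted by the additivity of connectivity under joins) against the dimension of the target and to feed this into equivariant obstruction theory. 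Because the $\mathfrak{S}_r$-obstruction need not vanish on the nose, one follows \v{Z}ivaljevi\'c--Vre\'cica and, in sharpest form, Blagojevi\'c--Matschke--Ziegler: reintroduce an $(r{+}1)$-fold symmetry (e.g.\ by adjoining a phantom point in convex position) and work with the resulting $\Z/(r+1)$-action. When $r+1$ is prime this is a group of prime order, the relevant actions are free on the appropriate quotients, the mod-$(r{+}1)$ obstruction class is nonzero, and the argument closes for all $d$, reproving the conjecture in that case.

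The step I expect to be the genuine obstacle is exactly the one that forces the prime hypothesis: \emph{carrying the non-existence of the equivariant map through for composite $r$} (equivalently, $r+1$ not prime). When every nontrivial subgroup of the acting group has order divisible by two distinct primes, the characteristic-$p$ tools that make the obstruction nonzero break down. Worse, for the closely related topological Tverberg problem the analogous equivariant map \emph{does} exist once $r$ is not a prime power (\"Ozaydin, and the counterexamples of Frick built on Mabillard--Wagner), so no purely topological statement of this shape can hold in full generality, and in particular one cannot hope to deduce the conjecture from the topological Tverberg theorem by the constraint method of Blagojevi\'c--Frick--Ziegler. Resolving the composite case therefore seems to require either an invariant that remembers the convexity hypothesis --- which the point map $f$ carries but a general continuous test map forgets --- or an honest convex-geometric argument bypassing equivariant topology, perhaps a ``colorful'' sharpening of Sarkaria's tensor reduction of Section~\ref{section-preliminaries} coupled with a colored version of the colorful Carath\'eodory theorem. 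I view the latter as the more promising avenue suggested by the methods of this paper, though I do not see how to complete it.
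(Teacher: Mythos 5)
There is nothing in the paper for your write-up to be measured against: the statement you were given is the B\'ar\'any--Larman conjecture, which the paper records precisely as a \emph{conjecture} and never proves. The closest the paper comes is Theorem \ref{theorem-tverbergcolored} (together with Theorem \ref{theorem-coloredcarth-epsilon}), an $\varepsilon$-relaxation proved by Sarkaria's $r$-block reduction plus random colorful transversals and Hoeffding's inequality; that result gives roughly $1+\ln(1/\varepsilon)$ colorful partitions covering all large sub-families, and the full conjecture would correspond to the (still open) statement $m_{\col}(\varepsilon,r)=1$ from \cite{soberon2016robust}. So the correct assessment of your submission is that it is a research program, not a proof, and you say so yourself.

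As a survey of the state of the art your sketch is accurate: the chessboard-complex configuration space $\Delta_{r,r}^{*(d+1)}$ with the $\mathfrak{S}_r$-action, the test map into $S(W_r^{\oplus d})$, the connectivity input of Bj\"orner--Lov\'asz--Vre\'cica--\v{Z}ivaljevi\'c, and the Blagojevi\'c--Matschke--Ziegler resolution when $r+1$ is prime are all correctly described, as is the reason the method stalls for composite $r+1$ and the warning that \"Ozaydin/Mabillard--Wagner/Frick rule out deducing the general case from a purely topological Tverberg statement via the constraint method. The genuine gap is exactly the one you name --- the nonvanishing of the obstruction (or any substitute argument) without the primality hypothesis --- and it is a gap no one has closed; it is why the statement remains a conjecture in the paper. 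If you pursue the convex-geometric route you propose (a colored strengthening of Sarkaria's tensor trick fed into a suitable colorful Carath\'eodory statement), be aware that the paper's own use of that machinery only yields probabilistic/approximate conclusions of the type in Section \ref{section-colored}, so a new deterministic selection idea, not just sharper bookkeeping, would be required.
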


A partition $X_1, \ldots, X_r$ with $|F_i \cap X_j|=1$ for all $i,j$ is called a \textit{colorful partition}.  Consult \cite{Blagojevic:2014js, Blagojevic:2011vh, blago15} and the references therein the current solved cases and techniques.  We present an $\varepsilon$-version of the conjecture above in the following theorem.  Let $p_r\sim 1 - 1/e$ be the probability that a random permutation of a set with $r$ elements has fixed points.

\begin{theorem}\label{theorem-tverbergcolored}
	Let $r,d$ be positive integers and $\varepsilon >0$ be a real number.  There is an $m_{\col} = m_{\col}(\varepsilon, r)$ such that the following holds.  For a sufficiently large $N$, if we are given $N$ sets $F_1, \ldots, F_N$ of $r$ points in $\rr^d$ each, then there are $m_{\col}$ colorful partitions of $\mathcal{F} = \{F_1, \ldots, F_N\}$ such that for any $\mathcal{G} \subset \mathcal{F}$ with $|\mathcal{G}| \ge \varepsilon |\mathcal{F}|$, at least one of the partitions induces a colorful Tverberg partition on $|\mathcal{G}|$.  Moreover, we have
	\[
	m_{\col} \le \left\lfloor \frac{\ln\left({\varepsilon} \right)}{\ln (1-p_r)} \right\rfloor +1
	\]
\end{theorem}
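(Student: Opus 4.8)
The plan is to follow the proof of Theorem~\ref{theorem-epsilon-caratheodory} almost line for line, with two changes: where that argument picks one point uniformly at random from each color class, here we pick a uniformly random \emph{bijection} $\pi_i\colon F_i\to\{1,\dots,r\}$ for each color class (i.e.\ a random colorful partition), and the contraction factor $1-\tfrac1r$ is replaced by $1-p_r$. First I would record a colorful version of Sarkaria's reduction: for $v_1,\dots,v_r$ the vertices of a regular simplex centered at $0$ and a bijection $\pi$ of $F_i=\{x_{i,1},\dots,x_{i,r}\}$, set $\bar x_{i,\pi}=\sum_{\ell=1}^r(x_{i,\ell},1)\otimes v_{\pi(\ell)}\in\rr^{(d+1)(r-1)}$. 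As in Lemma~\ref{lemma-magic}, the colorful partition of $\cup\mathcal G$ given by $(\pi_i)_{i\in I_{\mathcal G}}$ is a colorful Tverberg partition iff $0\in\conv\{\bar x_{i,\pi_i}:i\in I_{\mathcal G}\}$, and for each $i$ one has $0\in\conv\{\bar x_{i,\pi}:\pi\in S_r\}$ since $\sum_\pi\bar x_{i,\pi}$ is a multiple of $\sum_\ell(x_{i,\ell},1)\otimes(\sum_j v_j)=0$. So it is enough to produce $m_{\col}$ transversals $P_1,\dots,P_{m_{\col}}$ (one $\bar x_{i,\pi_i}$ per color class) so that every $\mathcal G$ with $|\mathcal G|\ge\varepsilon|\mathcal F|$ has some $P_k$ with $0\in\conv P_k(\mathcal G)$.

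Then I would run the inductive construction of Section~\ref{section-proof}. Apply Lemma~\ref{lemma-halfspace} to the $N r!$ points $\{\bar x_{i,\pi}\}$ to get a family $\mathcal H$ of $A=(Nr!)^{(d+1)(r-1)}$ halfspaces through $0$ that certify membership of $0$ in convex hulls. Put $N_0=N$, $N_k=(1-p_r)N_{k-1}+\lambda$ with $\lambda=\Theta(\sqrt{m N\ln A})$, and build families $\gimel_k$ of ``surviving'' sets of color classes with $|\gimel_k|\le A^k$, each member of size $\le N_k$, and with the invariant that if no $P_{k'}$ with $k'\le k$ is a colorful Tverberg partition on $\mathcal G$ then $\mathcal G$ is contained in some member of $\gimel_k$. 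At step $k$ choose the bijections $\pi_i^k$ uniformly and independently; for $H\in\mathcal H$ and $\mathcal V\in\gimel_{k-1}$ look at $\sum_{F_i\in\mathcal V}\mathbf 1[\bar x_{i,\pi_i^k}\notin H]$, apply Hoeffding's inequality, and union bound over $\mathcal H$ and $\gimel_{k-1}$ to conclude that with positive probability every set $\{F_i\in\mathcal V:\bar x_{i,\pi_i^k}\notin H\}$ has size $\le(1-p_r)N_{k-1}+\lambda=N_k$; take $\gimel_k$ to be the family of these sets. Solving the recursion gives $N_m\le(1-p_r)^m N+\lambda/p_r$, which is $<\varepsilon N$ for large $N$ because $m=m_{\col}$ forces $(1-p_r)^m<\varepsilon$. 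Hence every $\mathcal G$ with $|\mathcal G|\ge\varepsilon N$ escapes $\gimel_m$, so one of $P_1,\dots,P_m$ is colorful Tverberg on it.

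The step that is not a routine transcription — and the main obstacle — is showing that the contraction factor can be taken to be $1-p_r$, i.e.\ that $\mathbb P[\bar x_{i,\pi}\notin H]\le 1-p_r$ for the relevant certificates. Using an \emph{arbitrary} halfspace $H$ through $0$ one only gets $1-\tfrac1r$ (the probability $\mathbb P[\bar x_{i,\pi}\in H]$ can be exactly $1/r$), which would merely reproduce the dimension-blind bound $r\ln(1/\varepsilon)$ of Theorem~\ref{theorem-main}. To reach $p_r$ one needs a smaller family of certificates, adapted to the tensor encoding, for which ``$\pi_i$ makes color class $i$ bad'' is contained in the event ``$\pi_i$ avoids a fixed reference bijection $\rho$ pointwise'' (equivalently $\pi_i\rho^{-1}$ is a derangement), whose probability is exactly $1-p_r$, together with a proof that failure of a colorful partition on $\mathcal G$ is always witnessed by such a reference-avoidance certificate, so that the $\gimel_k$ invariant is preserved. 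This is most transparent in the extremal configuration where all $F_i$ are nearly coincident translates of $\{v_1,\dots,v_r\}$: there a colorful partition is \emph{not} colorful Tverberg on $\mathcal G$ exactly when, for each position $\ell$, some part is missed by every class of $\mathcal G$, and averaging over reference bijections keeps at least $(1-p_r)|\mathcal G|$ classes alive — which also shows that $\lfloor\ln\varepsilon/\ln(1-p_r)\rfloor+1$ is best possible for this problem (at least when $d\ge r-1$, which is why the theorem states only ``$\le$''). Handling a general configuration, or reducing it to this extremal case, is where the real work lies; granting it, the bookkeeping above yields $m_{\col}\le\lfloor\ln\varepsilon/\ln(1-p_r)\rfloor+1$.
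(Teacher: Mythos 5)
There is a genuine gap, and it sits exactly where you flag it: you never establish that the per-class success probability against a halfspace is at least $p_r$, and without that the whole recursion only contracts by the wrong factor. Worse, your encoding makes this unprovable. By collapsing each choice of bijection $\pi$ for the class $F_i$ into the single summed point $\bar x_{i,\pi}=\sum_{\ell}(x_{i,\ell},1)\otimes v_{\pi(\ell)}$, you (a) lose the ``if and only if'' of Lemma \ref{lemma-magic} --- $0\in\conv\{\bar x_{i,\pi_i}\}$ is a strictly stronger condition than the Sarkaria condition for the colorful partition, since it forces equal coefficients on the $r$ tensor points coming from one class --- and (b) ruin the probability bound: all you know about the $r!$ points $\{\bar x_{i,\pi}\}_{\pi\in S_r}$ is that their centroid is $0$, so for a halfspace $H\ni 0$ the guarantee is only $\mathbb{P}[\bar x_{i,\pi}\in H]\ge 1/r!$, not $1/r$ and certainly not $p_r$. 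Your closing paragraph then speculates about derangement-type certificates adapted to an extremal configuration, but explicitly defers the general case (``where the real work lies''), so the claimed bound is not proved.

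The paper avoids all of this by keeping the transversal of each class as a set of $r$ points rather than one summed point. Each color class becomes an $r$-block: the $r\times r$ array of points $(x_{i,\ell},1)\otimes v_j$, in which every column has $0$ in its convex hull. A colorful partition corresponds to choosing a permutation pattern in each block (one entry per row and per column), and the event fed to Hoeffding is ``the chosen $r$-point set meets $H$,'' not ``a single chosen point lies in $H$.'' Since every column of a block meets any halfspace through the origin, a uniformly random permutation pattern avoids $H$ only if it avoids a set containing one marked entry per column, and the observation quoted from \cite{soberon2016robust} is precisely that this avoidance probability is at most $1-p_r$ for every $r$-block and every such $H$. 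That single cited fact is the missing ingredient; once it is in place, the Hoeffding/union-bound/$\gimel_k$ bookkeeping you describe goes through verbatim with contraction factor $1-p_r$, which is why Theorem \ref{theorem-coloredcarth-epsilon} is stated in terms of $r$-blocks and colorful transversals rather than single points. To repair your write-up, replace the summed-point encoding by the block encoding, redefine $x^k_B(H)$ as the indicator that the chosen $r$-point transversal of the block $B$ meets $H$, and either cite or prove the $p_r$ lower bound for that event.
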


We should note that the theorem above gives $m \sim 1 + \ln (1/\varepsilon)$ if $r$ is large enough.  This is related to the colorful version from \cite{soberon2016robust}, which seeks the smallest $\varepsilon$ for which $m_{\col}(\varepsilon, r) = 1$.  Using our notation, the main conjecture in that paper states the following.

\begin{conjecture}
For all $\varepsilon >0$ and any positive integer $r$, we have
\[
m_{\col}(\varepsilon, r) = 1.
\]
\end{conjecture}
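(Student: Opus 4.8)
\textbf{Plan of proof for Theorem \ref{theorem-tverbergcolored}.}
The strategy is to mimic the proof of Theorem \ref{theorem-epsilon-caratheodory} almost verbatim, replacing "pick a uniformly random point of each color class" with "pick a uniformly random colorful partition", and tracking the corresponding success probability. Concretely, a colorful partition of $\mathcal{F}=\{F_1,\dots,F_N\}$ is obtained by choosing, for each $F_i=\{f_{i,1},\dots,f_{i,r}\}$, a bijection $\sigma_i\colon [r]\to[r]$; then $X_j = \{f_{i,\sigma_i(j)} : i\in[N]\}$. Sarkaria's technique still applies: set $n=(d+1)(r-1)$, let $v_1,\dots,v_r$ be the vertices of a regular simplex centered at the origin in $\rr^{r-1}$, and form $\bar f_{i,\ell} = (f_{i,\ell},1)\otimes v_{\ell}\in\rr^n$. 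By Lemma \ref{lemma-magic} (applied color class by color class), a colorful partition $X_1,\dots,X_r$ is a colorful Tverberg partition exactly when $0\in\conv\{\bar f_{i,\sigma_i(j)}\otimes \text{(marker)}\}$; more usefully, for each $i$ define the "colorful transversal element" to be the point of $\rr^n$ determined by $\sigma_i$, and observe that for a random bijection $\sigma_i$ and any fixed half-space $H\ni 0$, the probability that the chosen point lands in $H$ is at least $p_r$ — this is where the quantity $p_r$ (the probability a uniform random permutation of $[r]$ has a fixed point) enters, since $0\in\conv F_i$ forces the simplex-barycentric structure to guarantee that a "derangement-free" choice falls on the correct side. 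This replaces the bound $\mathbb{E}(x^k_F(H))\ge 1/r$ by $\mathbb{E}(x^k_F(H))\ge p_r$.

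First I would set up the iteration exactly as in section \ref{section-proof}: fix $A=(Nr)^n$ (the number of half-spaces supplied by Lemma \ref{lemma-halfspace} applied to the point set $\cup_i\{\bar f_{i,1},\dots,\bar f_{i,r}\}$), choose $\lambda > \sqrt{m_{\col} N\ln A}$, and define $N_0=N$, $N_k = N_{k-1}(1-p_r)+\lambda$. Then I would construct, for $k=1,\dots,m_{\col}$, a random colorful partition $P_k$ (independent bijections $\sigma_i^k$) together with a family $\gimel_k$ of "bad" subfamilies satisfying: (i) any $\mathcal{G}$ on which none of $P_1,\dots,P_k$ induces a colorful Tverberg partition is contained in some $\mathcal{V}\in\gimel_k$; (ii) $|\mathcal{V}|\le N_k$; (iii) $|\gimel_k|\le A^k$. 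The inductive step is the Hoeffding computation: for each $\mathcal{V}\in\gimel_{k-1}$ and each $H\in\mathcal{H}$, the indicator sum $\sum_{F\in\mathcal{V}}x_F^k(H)$ has mean $\ge p_r|\mathcal{V}|$, so $\mathbb{P}[\sum < p_r|\mathcal{V}| - \lambda] < e^{-2\lambda^2/N}$; the union bound over $A\cdot|\gimel_{k-1}|\le A^k$ pairs is $<1$ by the choice of $\lambda$, so a good $P_k$ exists. From such a $P_k$ one builds $\gimel_k$ by taking, for each $(\mathcal{V},H)$, the set $\mathcal{V}'=\{F\in\mathcal{V}: x_F^k(H)=0\}$, whose size is at most $|\mathcal{V}|(1-p_r)+\lambda\le N_k$; the covering property (i) follows because $0\notin\conv P_k(\mathcal{G})$ means some separating half-space $H$ has $x_F^k(H)=0$ for all $F\in\mathcal{G}$. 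After $m_{\col}$ rounds, a fully-bad $\mathcal{G}$ satisfies $|\mathcal{G}|\le N_{m_{\col}}\le N(1-p_r)^{m_{\col}} + r\lambda$, which is $<\varepsilon N$ once $N$ is large (since $\lambda=O(\sqrt{N\ln N})$) provided $(1-p_r)^{m_{\col}}<\varepsilon$, i.e. $m_{\col} > \ln\varepsilon/\ln(1-p_r)$; thus $m_{\col}=\lfloor \ln\varepsilon/\ln(1-p_r)\rfloor + 1$ suffices, giving the stated upper bound.

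The main obstacle — and the one place where genuine care beyond copying section \ref{section-proof} is needed — is establishing the bound $\mathbb{E}(x_F^k(H))\ge p_r$ cleanly, i.e. verifying that for each color class $F_i$ with $0\in\conv F_i$, a uniformly random colorful choice within $F_i$ lands in any given closed half-space through the origin with probability at least $p_r$. This requires pinning down precisely what "the point contributed by $F_i$ to the transversal" is in the Sarkaria encoding of the colorful problem, and checking that the event "$\sigma_i$ has at least one fixed point relative to the optimal labeling" is a sub-event of "the contributed point lies in $H$". One should double-check whether the correct constant is $p_r = \mathbb{P}[\text{random permutation of }[r]\text{ has a fixed point}] = 1-\sum_{j=0}^{r}(-1)^j/j!$ or something that only asymptotically equals $1-1/e$; the theorem as stated only claims an inequality $m_{\col}\le\lfloor\ln\varepsilon/\ln(1-p_r)\rfloor+1$, so a lower bound of $p_r$ on the per-class success probability is all that is required, and any slack only helps. (A matching lower-bound construction analogous to the peeling argument at the start of section \ref{section-proof} — repeatedly discarding a $p_r$-fraction forced to avoid fixed points — would show the bound is essentially tight, but is not needed for the statement as worded.)
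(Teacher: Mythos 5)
The statement you were asked to address is the conjecture that $m_{\col}(\varepsilon,r)=1$ for \emph{all} $\varepsilon>0$ and every $r$, i.e.\ that a \emph{single} colorful partition suffices no matter how small a fraction of the color classes one retains. This is an open problem: the paper does not prove it, but records it as the main conjecture of \cite{soberon2016robust}, restated in the notation introduced here. Your proposal instead sketches a proof of Theorem \ref{theorem-tverbergcolored} (via Theorem \ref{theorem-coloredcarth-epsilon}), which only yields the much weaker bound $m_{\col}(\varepsilon,r)\le\lfloor\ln\varepsilon/\ln(1-p_r)\rfloor+1$. That sketch does follow the paper's actual argument for those theorems quite closely (random colorful transversals, the per-block bound $\mathbb{E}[x^k_B(H)]\ge p_r$ from \cite{soberon2016robust}, Hoeffding plus a union bound over the families $\gimel_k$ and the half-spaces of Lemma \ref{lemma-halfspace}), but it does not touch the conjecture.

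The gap is not cosmetic: the Hoeffding/union-bound machinery is structurally incapable of giving $m_{\col}=1$ for small $\varepsilon$. After a single round, the ``bad'' subfamilies $\mathcal{V}'=\{B\in\mathcal{V}:x^1_B(H)=0\}$ can still have size up to $(1-p_r)N+\lambda$, so the argument only concludes when $\varepsilon>1-p_r$ (roughly $\varepsilon>1-1/e$ for large $r$); for smaller $\varepsilon$ it says nothing, which is exactly why the paper needs $m_{\col}\sim\ln(1/\varepsilon)/\ln\bigl(1/(1-p_r)\bigr)$ rounds. Proving the conjecture would require a genuinely new idea exploiting the colorful structure — for instance, the fact that the ``peeling'' lower bound of Section \ref{section-proof} (emptying one part by deleting a $1/r$-fraction) has no analogue for colorful transversals — and no such argument appears in your proposal. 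As a minor point, your description of the Sarkaria encoding is also off: the tensor factor should record the part $j$ to which a point is assigned (one works with $r$-blocks whose columns have $0$ in their convex hulls), and $p_r$ enters as the probability that a uniformly random colorful transversal of an $r$-block meets a given half-space through the origin, not via fixed points ``relative to an optimal labeling.''
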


To prove Theorem \ref{theorem-tverbergcolored}, we also use Sarkaria's transformation.  In order to translate the conditions on the colors through the tensor products, we need the following definition.

A set $B$ is an $r$-block if it is an $r \times r$ array of points in $\rr^n$ such that the convex hull of each column contains the origin.  A \textit{colorful transversal} of an $r$-block $B$ is a subset of $r$ points of $B$ that has exactly one point of each column and exactly one point of each row.  Given a family $\mathcal{B}$ of $r$-blocks, a \textit{colorful transversal} for $\mathcal{B}$ is the result of putting together a colorful transversal for each block.  If we apply Sarkaria's technique, colorful partitions in $\rr^d$ become colorful transversals of $r$-blocks in $\rr^n$. Theorem \ref{theorem-tverbergcolored} is then implied by the following.

\begin{theorem}\label{theorem-coloredcarth-epsilon}
	Let $n,r$ be positive integers and $\varepsilon >0$ be a real number.  $m_{\col} = m_{\col}(\varepsilon, r)$ such that the following holds.  For a sufficiently large $N$, if we are given $N$ $r$-blocks $B_1, \ldots, B_N$ in $\rr^n$, there are $m_{\col}$ colorful transversals $P_1, \ldots, P_m$ of $\mathcal{B} = \{B_1, \ldots, B_N\}$ such that for any $\mathcal{G} \subset \mathcal{B}$ with $|\mathcal{G}| > \varepsilon |\mathcal{B}|$ for at least one $k$ we have $0 \in \conv (P_k (\mathcal{G}))$.  Moreover, we have
	\[
	m_{\col} \le \left\lfloor \frac{\ln\left({\varepsilon} \right)}{\ln (1-p_r)} \right\rfloor +1.
	\]
\end{theorem}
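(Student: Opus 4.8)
The plan is to mirror the proof of Theorem \ref{theorem-epsilon-caratheodory} almost verbatim, replacing uniform random transversals by random \emph{colorful} transversals of the $r$-blocks, and replacing the constant $1/r$ (the lower bound on the probability a uniformly chosen point of a color class lands in a given half-space) by the corresponding quantity for the colorful setting. First I would fix the half-space machinery: apply Lemma \ref{lemma-halfspace} to the union of all points appearing in $B_1, \ldots, B_N$ (which is at most $r^2N$ points in $\rr^n$), obtaining a family $\mathcal{H}$ of at most $A = (r^2N)^n$ half-spaces through $0$ that suffice to test whether $0$ lies in the convex hull of any subset of these points. Then I would set up the inductive construction of colorful transversals $P_1, \ldots, P_{m_{\col}}$ together with families $\gimel_0 \supseteq \gimel_1 \supseteq \cdots$ of ``bad'' sub-families, exactly as in Section \ref{section-proof}: $\gimel_0 = \{\mathcal{B}\}$, and $\gimel_k$ obtained from $\gimel_{k-1}$ by intersecting each $\mathcal{V} \in \gimel_{k-1}$ with each half-space $H \in \mathcal{H}$ to form $\mathcal{V}' = \{B \in \mathcal{V} : P_k(B) \cap H = \emptyset\}$, so $|\gimel_k| \le A^k$ and the covering property (if no $P_{k'}(\mathcal{G})$ contains $0$ for $k' \le k$ then $\mathcal{G} \subseteq \mathcal{V}$ for some $\mathcal{V} \in \gimel_k$) holds by the same half-space-separation argument.

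The step that genuinely differs is the probabilistic estimate on $\mathbb{E}[x^k_B(H)]$, where $x^k_B(H)$ is the indicator that the chosen colorful transversal of $B$ meets the half-space $H$. To pick a colorful transversal of an $r$-block uniformly, choose a uniformly random permutation $\sigma$ of $\{1,\dots,r\}$ and take the points in positions $(i,\sigma(i))$. The key observation is that for any fixed $H$, since the convex hull of each column of $B$ contains $0$, every column has at least one entry inside $H$; hence if we pick the entry of a \emph{single} column uniformly at random the chance of hitting $H$ is at least $1/r$, but for the colorful transversal we can do better using the permutation structure. Concretely, I would argue: fix for each column $j$ a row $\rho(j)$ with $B_{\rho(j),j} \in H$; then the colorful transversal $\sigma$ meets $H$ whenever $\sigma$ agrees with some such assignment, and in particular whenever the partial matching can be extended — a cleaner route is to show $\mathbb{P}[\text{transversal meets } H] \ge p_r$, where $p_r$ is the probability that a uniform random permutation of $r$ elements has a fixed point, by a coupling: relabel rows within each column so that the guaranteed $H$-entry sits on the diagonal, and then a random permutation with a fixed point necessarily selects at least one diagonal entry, which lies in $H$. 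This gives $\mathbb{E}\big[\sum_{B \in \mathcal{V}} x^k_B(H)\big] \ge p_r |\mathcal{V}|$.

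With this lower bound in hand, Hoeffding's inequality gives $\mathbb{P}\big[\sum_{B \in \mathcal{V}} x^k_B(H) < p_r|\mathcal{V}| - \lambda\big] < e^{-2\lambda^2/N}$, and the union bound over $H \in \mathcal{H}$ and $\mathcal{V} \in \gimel_{k-1}$ yields a failure probability at most $A^k e^{-2\lambda^2/N} < 1$ provided $\lambda > \sqrt{m_{\col} N \ln A}$, so a good $P_k$ exists. The ``$\le N_k$'' bound on members of $\gimel_k$ then follows from $|\mathcal{V}'| = \sum_{B \in \mathcal{V}}(1 - x^k_B(H)) \le (1-p_r)|\mathcal{V}| + \lambda$, with the recursion $N_0 = N$, $N_k = (1-p_r)N_{k-1} + \lambda$, giving $N_{m_{\col}} \le (1-p_r)^{m_{\col}} N + \lambda/p_r$. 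Choosing $m_{\col}$ so that $(1-p_r)^{m_{\col}} < \varepsilon$, i.e. $m_{\col} = \lfloor \ln(\varepsilon)/\ln(1-p_r)\rfloor + 1$, forces $N_{m_{\col}} < \varepsilon N$ for $N$ large (since $\lambda = O(\sqrt{N \ln N})$), so any $\mathcal{G}$ with $|\mathcal{G}| > \varepsilon N$ cannot be contained in any $\mathcal{V} \in \gimel_{m_{\col}}$, hence $0 \in \conv(P_k(\mathcal{G}))$ for some $k$. I expect the main obstacle to be pinning down the clean $p_r$ lower bound on $\mathbb{E}[x^k_B(H)]$ and making the coupling-with-fixed-points argument fully rigorous; everything else is a transcription of Section \ref{section-proof}. (The reason the bound is only an inequality, not an equality, is that this probabilistic argument is lossy — the true value may be smaller, matching the $m_{\col}(\varepsilon,r) = 1$ conjecture.)
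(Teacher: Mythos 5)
Your proposal follows the paper's proof essentially verbatim: the same half-space family of size $(Nr^2)^n$ from Lemma \ref{lemma-halfspace}, the same inductive construction of the families $\gimel_k$ of bad sub-families, and the same Hoeffding-plus-union-bound step driving the recursion $N_k = (1-p_r)N_{k-1}+\lambda$. The one ingredient you attempt to prove rather than quote --- that a uniformly random colorful transversal of an $r$-block meets a half-space $H \ni 0$ with probability at least $p_r$ --- is simply cited by the paper from \cite{soberon2016robust}; note only that your relabeling/coupling sketch as written covers the case where the designated $H$-entries of the columns sit in pairwise distinct rows (so that the relabeling is a genuine global row permutation giving probability exactly $p_r$), while the non-injective case, where the probability is in fact larger, needs a separate short argument.
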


We also need the observation from \cite{soberon2016robust} that, for any $r$-block and any half-space $H$ that contains the origin, the probability that a random colorful transversal has points in $H$ is greater than or equal to $p_r$.

\begin{proof}
We proceed in a similar fashion to the proof of Theorem \ref{theorem-epsilon-caratheodory}.

Assume that $\varepsilon > (1-p_r)^m$.  We want to prove that there are $m$ transversals as the theorem requires.  We choose (with foresight) $A = (Nr^2)^n$, and $\lambda > \sqrt{mN\ln A}$.  Define a sequence recursively by $N_0 = N$ and $N_k = N_{k-1}(1-p_r) + \lambda$.   If we apply Lemma \ref{lemma-halfspace} to $\cup \mathcal{B}$, we obtain a family $\mathcal{H}$ of $A$ half-spaces, all containing $0$, which are enough to check if the convex hulls of the colorful transversals we construct contain $0$.

For $k = 1, \ldots, m$, we will construct $P_k$ and a family $\gimel_k$ of sets of $r$-blocks with the following properties.

\begin{itemize}
	\item Given $\mathcal{G} \subset \mathcal{B}$ such that $0 \not\in \conv(P_{k'}(\mathcal{G}))$ for all $k'=1,\ldots, k$, there must be a $\mathcal{V} \in \gimel_k$ such that $\mathcal{G} \subset \mathcal{V}$,
	\item if $\mathcal{V} \in \gimel_k$, then $|\mathcal{V}| \le N_k$, and
	\item $|\gimel_k| \le A^k$.
\end{itemize}
  We can consider $\gimel_0 = \{\mathcal{B}\}$ to start the induction.  We construct $P_k$ inductively, assuming $\gimel_{k-1}$ and $P_{k'}$ have been constructed for $k' < k$.  We first choose $P_k$ randomly.  For each $B \in \mathcal{B}$, we pick a colorful transversal $y^k_B$ randomly and independently.  Then, we denote $P_k = \{y^k_B : B \in \mathcal{B}\}$.
  
  Given a half-space $H \in \mathcal{H}$, consider the random variable
  \[
  x^k_B(H) = \begin{cases}
  	1 & \mbox{if } y^k_B \cap H \neq \emptyset \\
  	0 & \mbox{otherwise.}
  \end{cases}
  \]
  
  Since $\mathbb{E} [x^k_B(H)]\ge p_r$ for each $B \in \mathcal{B}, H \in \mathcal{H}$, we have that for any $\mathcal{V} \in \gimel_{k-1}$
  \[
  \mathbb{E} \left[\sum_{B \in \mathcal{V}}x^k_B(H)\right]\ge |\mathcal{V}|p_r
  \]

 Since all variables $x^k_B(H)$, $x^k_{B'}(H)$ are independent for $B \neq B'$, Hoeffidng's inequality gives
 
 \[
 \mathbb{P}\left[\sum_{B \in \mathcal{V}}x^k_B(H) < |\mathcal{V}|p_r - \lambda \right] < e^{-2\lambda^2/|\mathcal{V}|} \le e^{-2\lambda^2/N}.
 \]
 Therefore
 
 \begin{align*}
 	\mathbb{P}\left[\exists H \in \mathcal{H} \exists \mathcal{V} \in \gimel_{k-1} \sum_{B \in \mathcal{V}}x^k_B(H) < |\mathcal{V}|p_r - \lambda \right] & < A \cdot |\gimel_{k-1}| e^{-2\lambda^2/N} \\
 	 & \le A^k e^{-2\lambda^2/N} < 1
 \end{align*}
 
by the choice of $\lambda$.

Therefore, there must be a choice of $P_k$ such that for all $H \in \mathcal{H}$ and all $\mathcal{V} \in \gimel_{k-1}$ we have
\[
\sum_{B \in \mathcal{V}}x^k_B(H) \ge |\mathcal{V}|p_r - \lambda.
\]
We fix $P_k$ to be this choice.  In order to form $\gimel_k$, for each $\mathcal{V} \in \gimel_{k-1}$ and $H \in \mathcal{H}$, we include the set $\{B \in \mathcal{V}: x^k_B(H) = 0\}$.  Proving that $\gimel_k$ satisfies the desired properties and that this implies the conclusion of Theorem \ref{theorem-coloredcarth-epsilon} follows from arguments analogous to those at the end of section \ref{section-proof}.
 \end{proof}

\section{Acknowledgments}

The author would like to thank the careful comments of two anonymous referees, which have significantly improved the quality of this paper.
\newcommand{\etalchar}[1]{$^{#1}$}
\providecommand{\bysame}{\leavevmode\hbox to3em{\hrulefill}\thinspace}
\providecommand{\MR}{\relax\ifhmode\unskip\space\fi MR }
\providecommand{\MRhref}[2]{%
  \href{http://www.ams.org/mathscinet-getitem?mr=#1}{#2}
}
\providecommand{\href}[2]{#2}

\noindent Pablo Sober\'on \\
\textsc{
Mathematics Department \\
Northeastern University \\
Boston, MA 02445
}\\[0.1cm]

\noindent \textit{E-mail address: }\texttt{pablo.soberon@ciencias.unam.mx}


\begin{thebibliography}{GCRRP17}

\bibitem[ABFK92]{Alon:1992ek}
Noga Alon, Imre B{\'a}r{\'a}ny, Zotl\'an F{\"u}redi, and Daniel~J. Kleitman,
  \emph{{Point selections and weak $\varepsilon$-nets for convex hulls}},
  Combin. Probab. Comput. \textbf{1} (1992), no.~03, 189--200.

\bibitem[AS16]{alonspencer}
Noga Alon and Joel~H. Spencer, \emph{The probabilistic method}, fourth ed.,
  Wiley Series in Discrete Mathematics and Optimization, John Wiley \& Sons,
  Inc., Hoboken, NJ, 2016.

\bibitem[Azu67]{Azu67}
Kazuoki Azuma, \emph{Weighted sums of certain dependent random variables},
  T\^ohoku Math. J. (2) \textbf{19} (1967), 357--367. \MR{0221571}

\bibitem[B{\'{a}}r82]{Barany:1982va}
Imre B{\'{a}}r\'{a}ny, \emph{{A generalization of Carath{\'e}odory's theorem}},
  Discrete Math. \textbf{40} (1982), no.~2-3, 141--152.

\bibitem[B{\'{a}}r15]{baranytensors}
\bysame, \emph{Tensors, colours, octahedra}, Geometry, structure and randomness
  in combinatorics, CRM Series, vol.~18, Ed. Norm., Pisa, 2015, pp.~1--17.

\bibitem[BFZ14]{Blagojevic:2014js}
Pavle V.~M. Blagojevi\'c, Florian Frick, and G\"unter~M. Ziegler,
  \emph{Tverberg plus constraints}, Bull. Lond. Math. Soc. \textbf{46} (2014),
  no.~5, 953--967.

\bibitem[BL92]{Barany:1992tx}
Imre B{\'a}r{\'a}ny and David~G. Larman, \emph{{A Colored Version of Tverberg's
  Theorem}}, J. London Math. Soc. \textbf{s2-45} (1992), no.~2, 314--320.

\bibitem[BMN11]{Bukh:2011vs}
Boris Bukh, Ji{\v{r}}{\'i} Matou{\v s}ek, and Gabriel Nivasch, \emph{{Lower
  bounds for weak epsilon-nets and stair-convexity}}, Israel J. Math.
  \textbf{182} (2011), no.~1, 199--228.

\bibitem[BMZ11]{Blagojevic:2011vh}
Pavle V.~M. Blagojevi\'c, Benjamin Matschke, and G\"unter~M. Ziegler,
  \emph{Optimal bounds for a colorful {T}verberg-{V}re\'cica type problem},
  Adv. Math. \textbf{226} (2011), no.~6, 5198--5215.

\bibitem[BMZ15]{blago15}
\bysame, \emph{Optimal bounds for the colored {T}verberg problem}, J. Eur.
  Math. Soc. (JEMS) \textbf{17} (2015), no.~4, 739--754.

\bibitem[BO97]{BaranyOnn}
Imre B{\'a}r{\'a}ny and Shmuel Onn, \emph{Colourful linear programming and its
  relatives}, Math. Oper. Res. \textbf{22} (1997), no.~3, 550--567.

\bibitem[BS17a]{barany2016tverberg}
Imre B\'ar\'any and Pablo Sober\'on, \emph{Tverberg plus minus}, arXiv preprint
  arXiv:1612.05630v2 (2017), To appear in Discrete Comput. Geom.

\bibitem[BS17b]{barany2017survey}
\bysame, \emph{Tverberg's theorem is 50 years old: a survey}, arXiv preprint
  arXiv:1712.06119 (2017), To appear in Bull. Amer. Math. Soc.

\bibitem[CEG{\etalchar{+}}95]{CEGGSW95}
Bernard Chazelle, Herbert Edelsbrunner, Michelangelo Grigni, Leonidas Guibas,
  Micha Sharir, and Emo Welzl, \emph{Improved bounds on weak {$\epsilon$}-nets
  for convex sets}, Discrete Comput. Geom. \textbf{13} (1995), no.~1, 1--15.

\bibitem[CEM{\etalchar{+}}96]{Clarksonradon}
Kenneth~L. Clarkson, David Eppstein, Gary~L. Miller, Carl Sturtivant, and
  Shang-Hua Teng, \emph{Approximating center points with iterative {R}adon
  points}, Internat. J. Comput. Geom. Appl. \textbf{6} (1996), no.~3, 357--377,
  ACM Symposium on Computational Geometry (San Diego, CA, 1993).

\bibitem[GCRRP17]{GRR17tolerant}
Natalia Garc{\'i}a-Col{\'i}n, Miguel Raggi, and Edgardo Rold\'an-Pensado,
  \emph{A note on the tolerant {T}verberg theorem}, Discrete Comput. Geom.
  \textbf{58} (2017), no.~3, 746--754.

\bibitem[Hoe63]{Hoe63}
Wassily Hoeffding, \emph{Probability inequalities for sums of bounded random
  variables}, J. Amer. Statist. Assoc. \textbf{58} (1963), 13--30.

\bibitem[Lar72]{Larman:1972tn}
David~G. Larman, \emph{{On Sets Projectively Equivalent to the Vertices of a
  Convex Polytope}}, Bull. Lond. Math. Soc. \textbf{4} (1972), no.~1, 6--12.

\bibitem[Mat02]{Matousek:2002td}
Ji{\v{r}}{\'i} Matou{\v{s}}ek, \emph{Lectures on discrete geometry}, Graduate
  Texts in Mathematics, vol. 212, Springer-Verlag, New York, 2002.

\bibitem[MS14]{MZ14tolerant}
Wolfgang Mulzer and Yannik Stein, \emph{Algorithms for tolerant {T}verberg
  partitions}, Internat. J. Comput. Geom. Appl. \textbf{24} (2014), no.~4,
  261--273.

\bibitem[MW04]{MW04}
Ji{\v{r}}{\'i} Matou{\v{s}}ek and Uli Wagner, \emph{New constructions of weak
  {$\epsilon$}-nets}, Discrete Comput. Geom. \textbf{32} (2004), no.~2,
  195--206.

\bibitem[Sar92]{Sarkaria:1992vt}
Karanbir~S. Sarkaria, \emph{{Tverberg{\textquoteright}s theorem via number
  fields}}, Israel J. Math. \textbf{79} (1992), no.~2, 317--320.

\bibitem[Sob15]{soberon2015equal}
Pablo Sober{\'o}n, \emph{Equal coefficients and tolerance in coloured tverberg
  partitions}, Combinatorica \textbf{35} (2015), no.~2, 235--252.

\bibitem[Sob18]{soberon2016robust}
\bysame, \emph{{Robust Tverberg and Colourful Carath\'eodory results via Random
  Choice}}, Combinatorics, Probability and Computing \textbf{27} (2018), no.~3,
  427--440.

\bibitem[SS12]{Soberon:2012er}
Pablo Sober{\'o}n and Ricardo Strausz, \emph{{A generalisation of Tverberg's
  theorem}}, Discrete Comput. Geom. \textbf{47} (2012), 455--460.

\bibitem[Tve66]{Tverberg:1966tb}
Helge Tverberg, \emph{{A generalization of Radon's theorem}}, J. London Math.
  Soc. \textbf{41} (1966), no.~1, 123--128.

\end{thebibliography}
\end{document}